 \newtheorem{definition}{Definition}[section]
\newtheorem{remark}{Remark}[section]
\newtheorem{example}{Example}[section]
\newtheorem{theorem}{Theorem}[section]
\newtheorem{lemma}{Lemma}[section]
\newtheorem{proposition}{Proposition}[section]
\newtheorem{corollary}{Corollary}[section]
\renewcommand{\d}{\text{\rm{d}}}
\newcommand{\equlaw}{\stackrel{(d)}{=}}
\newcommand{\Var}{\text{\rm{\bf Var}}}
\newcommand{\bo}{\bm{\omega }}
\newcommand{\bmu}{\bm{\mu }}
\newcommand{\cG}{c_{\text{\rm{inf}}}}
\newcommand{\cB}{c_{\text{\rm{Bin}}}}
\newcommand{\x}{{\bf x}}
\newcommand{\argmin}{\text{\rm{argmin}}}
\newcommand{\U}{{\bf U}}
\newcommand{\Leb}[1][d]{\mathscr  L^{{#1}}}
\newcommand{\be}{{\bf e}}
\newcommand{\barU}{\overline{\U}}
\newcommand{\bS}{{\mathbb  S}}
\newcommand{\K}{ {\bf K}}
 \newcommand{\odd}{\text{\rm{ odd}}}
\renewcommand{\o}{{\bf 0}}
\newcommand{\q}{{\bf q}}
\renewcommand{\t}{{\bf t}}
\newcommand{\y}{{\bf y}}
\newcommand{\I}{{\bf I}}
\newcommand{\J}{{\bf J}}
\newcommand{\leqio}{\stackrel{\text{\rm{\bf i.o.}}}{\leqslant }}
\newcommand{\s}{{\bf s}}
\newcommand{\z}{{\bf z}}
\numberwithin{equation}{section}
\title{ Diophantine   Gaussian excursions and random walks }
\begin{document}

%
%
\maketitle

{\bf Abstract}
We establish general asymptotic upper and lower bounds for the volume variance of Euclidean Gaussian nodal excursions in terms of the random walk associated to the spectral measure. These bound are sharp in several situations, and under mild assumptions,  the variance is at least linear.

To obtain sublinear variances, we focus on the case
where the spectral measure is purely atomic, and show that  the associated irrational random walk on the multi-dimensional torus comes back more often close to $0$ when the atoms are well approximable by rational tuples. Hence the excursion behaviour strongly depends on the {\it diophantine properties} of the atoms, i.e. on the quality of approximation of the atom locations by   rationals. The volume variance has   fluctuations which power can be arbitrarily close from the maximum $2d$ (quadratic fluctuations), whereas  if the atoms are badly approximable the excursion is strongly hyperuniform, meaning the variance asymptotic power is  minimal, $(d-1)$, corresponding to the window boundary measure. 
Also, given any reasonable variance asymptotic behaviour, there are uncountably many sets of spectral atoms that realise it.

The versatility of the variance formula is  illustrated by other examples where the spectral measure support can have higher dimension, in particular it is able to capture the variance cancellation phenomenon of Gaussian random waves, and it also yields that there are no hyperuniform isotropic Gaussian excursions.\\

{\bf Keywords:} Gaussian fields, nodal excursion, random walk, diophantine approximation, hyperuniformity, Gaussian random waves, variance cancellation.\\

{\bf AMS 2010} {60G15, 60G50, 11J13, 34L20  }

 \newpage

 \tableofcontents

\section{Introduction}

 The primary motivation of this article is to study the variance of the excursion volume  for Euclidean stationary Gaussian fields, and exhibit a class of models   that realise any prescribed asymptotic variance behavior. It turns out that this can only be achieved by spectral measures with a low dimensional support, hence we consider  measures with a finite support. This investigation  requires to study a random walk which  behaviour depends on the diophantine properties of the spectral atoms, i.e. on the quality of approximation of the atom locations by rationals.
 To conduct this program, we establish two unrelated results which are  of independent interest, corresponding to Sections \ref{sec:exc} and \ref{sec:RW}, which are the main results of this paper.  They are then combined in Section \ref{sec:intro-Gauss-dioph}.
 
 The first result, Theorem \ref{thm:general-var}, deals with the volume of general Euclidean Gaussian fields excursions. The main finding is that the variance magnitude is strongly related to the probability of the associated random walk to return around $0.$ The second result, Theorem \ref{thm:dioph-rw}, contains bounds for random walks  with irrational increments.  The combination of those two results yields variance asymptotics for diophantine Gaussian excursions, as detailed in Section \ref{sec:intro-Gauss-dioph}, culminating with Theorem \ref{thm:main-intro}.
 
 The results about diophantine random walks are of independent interest and can be projected onto the torus, adding some uniform estimates to the existing literature (see Section \ref{sec:intro-walk}).  
 
 The fact that  Theorem \ref{thm:general-var} has a more general scope and can be applied in various situations is illustrated by the variance cancellation phenomenon for a fundamentally different model, the Gaussian random wave, see Section \ref{sec:intro-waves}.\\

  \subsection{Gaussian excursions volume variance}
  \label{sec:intro-Gauss}
  
  The main actors of this article are  centred stationary real Gaussian random fields $\{X(\t);\t\in \mathbb{R}^{d}\}$, which law is invariant under translations of $\mathbb{R}^{d}$. See the monograph \cite{AT07} for a comprehensive exposition of main properties and  fundamental results about Gaussian fields and their geometry.  It is known that they are completely characterised by their reduced covariance function
\begin{align*}
C(\t)=\mathbb{E}(X(0)X(\t)),\t\in \mathbb{R}^{d},
\end{align*} 
or by their {\it spectral measure}, i.e. the unique finite symmetric measure $\bmu $ on $\mathbb{R}^{d}$ such that $C$ admits the representation
\begin{align}
\label{eq:def-spectral}
C(\t)=\int_{\mathbb{R}^{d}}e^{-i  \t \cdot \x   }\bmu (d\x),\t\in \mathbb{R}^{d},
\end{align}
 where $ \cdot    $ denotes the standard scalar product. 
 In all the paper, quantities related to vectors in $\mathbb{R}^{d}$ are denoted in bold ($\t,\bmu,\x,\bo,$ etc...).

Excursions of Gaussian processes on the real line have  often been studied through their number of crossings with the axis, see \cite{Kac43,CL67,Cuz,Slud91,KL97} or the survey \cite{Kratz-survey}. Elementary considerations yield that the average number of crossings on an interval is proportional to the length of the interval. Furthermore, if $\bmu $ contains more than one (symmetrised) atom, the variance of the number of crossings is quadratic \cite{Lac20,ABF}. We  focus here on the Lebesgue measure of the nodal excursions 
\begin{align*}
\{X>0\}=\{\t\in \mathbb{R}^{d}:X(\t)>0\}.
\end{align*}
Here again, the field centering and an application of Fubini's theorem yields that the expectation is proportional to the volume:
\begin{align*}
\mathbb{E}(\Leb[d](A\cap \{X>0\}))=\frac{\Leb[d](A)}{2},A\subset \mathbb{R}^{d}
\end{align*}where $ \Leb$ is the $ d-$dimensional Lebesgue measure. We give in Section \ref{sec:exc} general upper and lower bounds for the variance of the excursion volume
\begin{align*}
V_{\bmu}(T)=\Var(\Leb(\{X>0\}\cap B_{d}(0,T)))
\end{align*} 
where $B_{d}(0,T)$ is the centered ball with radius $T.$ These bounds imply in particular that if $X$ is isotropic, or more generally if $\bmu$'s support has dimension $\geqslant 1$ and spans the whole space, the volume has at least linear variance, i.e. larger than $T^{d}$ (see Section \ref{sec:structure}).

If on the other hand $\bmu$'s support is finite, a wide class of asymptotic behaviours are reachable.

\begin{corollary}Let  $\psi(q) $ a function from $\mathbb{N}^{*}$ to $ [0,1]$  decaying regularly  faster than $q^{-\frac{ 1+2d}{1+d}}$ (Definition \ref{def:regular}). 
There are uncountably many finite sets $\Sigma\subset \mathbb{R}^{d} $  such that if $\bmu$ is symmetric with support $\Sigma ,$
there are $0<c_{-}\leqslant c_{+}<\infty $ such that for $T>0$ sufficiently large 
\begin{align*}
c_{-} T^{2d}{\psi ^{-1}(T)^{-(1+2d)}}\leqio V_{\bmu}(T)\leqslant 
c_{+}T^{2d}{\psi ^{-1}(T)^{-(1+2d)}}\end{align*} 
where $\psi ^{-1}$ is the pseudo-inverse of $\psi $ (defined at \eqref{eq:pseudo-inv}) and
 $\leqio$ means that the inequality is true for a sequence $T_{k}\to \infty .$
\end{corollary}

By carefully choosing the function $\psi $, one can hence have any prescribed variance magnitude which has sufficiently regular variation.
We have a parametric model which achieves any reasonable asymptotic variance between the minimal {\it surface-scaling order}, in $T^{d-1}$,  and the maximal {\it quadratic order}, in $T^{2d}$(see Proposition \ref{cor:intro}
 for explicit choices of $\Sigma$ which yield any power law behaviour for the variance). 
The need for models that yield any prescribed variance asymptotics is explained in \cite{ChenTor18}, along with another such procedure based on  Fourier transforms.
When the  variance is sublinear (below $T^{d}$), the excursion is said to be  {\it hyperuniform}, contributing to the already large research body on the subject (see Section \ref{sec:structure} for more insights and motivation).

The result above is obtained with a Gaussian field which spectral measure $\bmu$ is of the form 
$$\bmu=\sum_{k=1}^{d} \sum_{i=0}^{m}\bar \delta _{\omega _{i}\be_{k}},\text{\rm{  where  }}\bar \delta _a=\frac{ 1}{2}(\delta _{a}+\delta _{-a}),a\in \mathbb{R}^{d},$$ and the  atoms $\omega _{i},i=1,\dots ,m$  are   $\psi $-approximable, i.e. roughly speaking such that for infinitely many $q=(q_{i})\in \mathbb{Z} ^{m}$, $\sum_{i}\omega _{i}q_{i} $ is $\psi (q)$-close to an integer (and this is not true for $\varphi <<\psi $), and the $\be_{k},1\leqslant k\leqslant d$ form a basis of $\mathbb{R}^{d}$. Let us give formal definitions, as they will be useful throughout the introduction and the paper: say that $\omega \in \mathbb{R}^{m}$ is 
\begin{align}
\label{eq:def-BA-intro}
\text{\rm{$\psi $-BA (Badly Approximable), if for some $r>0$}}&\\\notag
\vspace{-2cm}|p-\omega \cdot q | \geqslant 2\psi (q) \text{\rm{   for all  }}p\in \mathbb{Z} ,&\,q\in \mathbb{Z} ^{m}\setminus B_{m}(0,r)\\
\label{eq:def-WA-intro}
\text{\rm{  and  $\psi $-WA (Well approximable), if for some $c>0$}}&\\\notag
 | p-\omega \cdot q | <c\psi (q) \text{\rm{  for infinitely many  }}&p\in \mathbb{Z} , q\in \mathbb{Z} ^{m},q\equiv 1,
\end{align} 
where $q\equiv 1$ means that $\sum_{i=1}^{m}q_{i}$ is an odd number.
The proof  consists in (i) expressing the variance in terms of the behaviour around $0$ of the diophantine random walk which increment measure is $\bmu$ (see Theorem \ref{thm:general-var}) and (ii) studying this random walk with the help of results from diophantine approximation theory, see Theorem \ref{thm:dioph-rw}. Independently, we also consider Gaussian random waves (Theorem \ref{thm:var-cancel}) and short range fields (Proposition \ref{prop:integrable}) to illustrate the wide scope of this method.

More refined results from  diophantine approximation theory actually yield the quantity of tuples $(\omega_{i}) $ yielding a given asymptotics variance, and we build at Section \ref{sec:randomised} mixtures of such Gaussian models with a random support $\Sigma $ giving a prescribed asymptotic variance.\\

 The rest of the introduction is mostly illustrative, it presents some aspects and corollaries of the important theoretical results of this paper (Theorems \ref{thm:general-var} and \ref{thm:dioph-rw}). Their proofs necessarily call to subsequent sections. None of the rest of the current section is necessary to read or understand the rest of the paper.

  \subsection{Background and motivation}
  
  Properties of excursions and level sets of continuous random Gaussian functions have been studied  under many different instances. The zero set of a one-dimensional Gaussian stationary process is the subject of an almost century long line of research, starting with the seminal works of Kac \& Rice \cite{Kac43}, or Cram\'er \& Leadbetter \cite{CL67}, and followed by many other authors mainly interested by second order behaviour, see the major contributions by Cuzick, Slud, Kratz \& L\'eon, \cite{Cuz,Slud91,KL97}. In higher dimensions,  zeros of Gaussian entire functions  \cite{BKPV,NS11} and nodal sets of high energy Gaussian harmonics on a compact manifold  \cite{KKW13,Wigman10,MPRW} (and their Euclidean counterpart the Random Wave Model \cite{MRV}) have attracted a lot of attention from both physicists and mathematicians. Random trigonometric Gaussian polynomials, i.e. independent Gaussian coefficients multiplied by trigonometric monomials based on a fundamental frequency,  have also been studied in the asymptotics of the large degree, see for instance \cite{WigGra11} and references therein.  We propose here a  crucial modification of the spectral measure support: instead of taking   frequencies in a proportional relation, we choose finitely many frequencies which are incommensurable; this specificity allows for instance to reach all possible behaviours for the variance asymptotics (see Theorem \ref{thm:main-intro}).\\

A different approach to our results is through the lens of  hyperuniform   models, defined at section \ref{sec:structure}. In the last decades, physicists have put in evidence states of matter  intermediate between crystals and liquids, where the medium exhibits apparent disorder at the local scale, but  fluctuations are suppressed at large scales. 
This denotes in some sense a long-range compensation of the medium behaviour,  and is considered by physicists a {\it new state of matter}, see the works by S. Torquato and his co-authors   \cite{Tor18,TS03} that introduce the topic and expose the main tools and discoveries. Even though the focus was primarily on atomic measures,   this concept has been then generalised to other random measures, in particular  bi-phased random media \cite{Tor16,Tor16b}.  Such heterogeneous materials abound in nature and synthetic
situations. Examples include composite and porous media, metamaterials, biological media (e.g., plant and animal
tissue), foams, polymer blends, suspensions, granular media, cellular solids, colloids.

The Gaussian realm provides models for many types of phenomena, and the present work yields Gaussian hyperuniform random sets, i.e. which variance on a large window is asymptotically negligible with respect to the window volume (see Section \ref{sec:structure}). The model we present here shares some similarities with perturbed lattices, in the sense that the long range correlations are very strong, but its disorder state  is also one step above as one cannot write it as the (perturbed) repetition of a given pattern. It shares with quasi-crystals the property of {\it almost periodicity}, defined below, and exhibits a spectrum reminiscent of quasi-crystals, see Fig \ref{fig:1}. Any asymptotic variance can be achieved, yielding in particular hyperuniform models. According to the typology established in  \cite[6.1.2]{Tor18}, the model is  type-I hyperuniform for almost all choice of parameters; but uncountably many choices of the parameters will yield type-II or actually any type of hyperuniformity. We also give randomised versions of the model not involving diophantine parameters which exhibit different types of hyperuniformity.

As it turns out,  a  non-isotropic model is necessary to obtain a hyperuniform behaviour (Proposition \ref{prop:isotropic}).
We use the general variance formula of Theorem \ref{thm:general-var} to study Gaussian random waves in any dimension and prove a variance cancellation phenomenon.\\

 \subsection{Diophantine random walk on the torus}
\label{sec:intro-walk}

We derive in Section \ref{sec:RW} results about diophantine random walks on $\mathbb{R}^{d}$, which ultimately lead to variance estimates for Gaussian diophantine excursions. The current section discusses the connections with the existing literature for the diophantine  random walks on the torus, and is completely disconnected from the results about Gaussian fields discussed above.

Let $(\be_{1},\dots ,\be_{d})$ be a basis of $\mathbb{R}^{d}$, $m\geqslant 1,\bmu$ be a symmetric measure on $\mathbb{R}^{d}$ parametrised by its support $\bo=(\omega _{[k]})_{1\leqslant k\leqslant d}\in (\mathbb{R}^{m})^{d}$ via
\begin{align}
\label{mu:general}
\bmu=\frac{1}{d(m+1)}\sum_{k=1}^{d} \sum_{i=0}^{m}\bar\delta _{\omega _{[k],i}\be_{k}},
\end{align}
with $\omega _{[k],0}=1$ by convention, 
   and let $\overline{\U}_{n}$ be the corresponding random walk on the torus
\begin{align*}
\overline{\U}_{n}= \{\sum_{i=1}^{n}X_{i}\}
\end{align*}
where the $X_{i}$ are independent and identically distributed with law $\bmu$ and $\{\x \}=(\{x_{[k]}\})\in [0,1[^{d}$ is the fractional part in $\mathbb{R}^{d}$.   The index $1\leqslant k\leqslant d$ relating to different dimensions in $\mathbb{R}^{d}$ is written between brackets $[k]$, to avoid confusion with the subscript $i$, usually running over different frequencies on each dimension. 

 It is clear that if  $\bo$'s components are well approximable by rationals,  the same goes for the increments of the random walk, hence it is likely to come back closer to $0$ faster. The study of random walks on a group started on finite arithmetic groups with the works of Diaconis, Saloff-Coste, Rosenthal, Porod, (see references in \cite{Su98}) and results for such irrational random walks in the continuous settings were then achieved by Diaconis \cite{Diaconis88}, and finally Su \cite{Su98}, who gave the optimal speed of convergence   of the law of $\barU_{n}$ in an appropriate distance. Then Prescott and Su  \cite{PreSu} extended the study in higher dimensional tori.

The novelty of our approach is to consider estimates as $\varepsilon \to 0$ uniformly in $n$; we show in Section \ref{sec:RW} that for a given $\varepsilon $, irrelevant of the number of steps $n$, there is a probability always smaller than $c_{+}\varepsilon ^{\frac{m}{m+\eta } }$ that the walk on the torus ends up in $B_{d}(0,\varepsilon ) $ after $n$ steps, where $\eta \geqslant 0$ is such that the $\omega _{k},1\leqslant k\leqslant d$ are $q^{-(m+\eta )}$-(BA). The lower bounds obtained in the same section show that these upper bounds are optimal.

\begin{remark}

This value is actually very sensitive to the probability of vanishing coordinates $\overline \U_{n,[k]}$ of $\overline{\U}_{n}$, in the sense that it decays slowly in $\varepsilon $ because of the fast recurrence to $0$ on the axes: for $p<d$
\begin{align*}
\mathbb{P}(\barU_{ n,[1]}=\barU_{ n,[2]}=\dots =\barU_{ n,[p]}=0)\sim n^{-p/2}.
\end{align*}
A heuristic argument is that the  symmetric random walk on $\mathbb{Z} $ has a probability $\sim  n^{-1/2}$ to come back to $0$ in $2n$ steps, and the components are {\it almost} independent up to the parity relation $n\equiv \sum_{k=1}^{d}\U_{n,[k]}$ (see Lemma \ref{lm:Gauss-approx}).

\end{remark}

In the light of the remark above,  only non-vanishing coordinates matter in the speed of decay as $\varepsilon \to 0.$
Denote by $\llbracket d\rrbracket $ the set $\{1,2,\dots ,d\}$. Define for $K\subset \llbracket d\rrbracket ,K\neq \emptyset ,$ the projected ball $B_{K}(\varepsilon ):=B_{d}(0,\varepsilon )\cap H_{K}$ where \begin{align*}
H_{K}:=\{\y=(y_{[k]})_{1\leqslant k\leqslant d}\in \mathbb{R}^{d}:y_{[k]}\neq 0,k\in K\text{\rm{ and }}y_{[k]}=0,k\notin K\}.
\end{align*}

 Then we have according to Theorem \ref{thm:dioph-rw}-(i):
 \begin{corollary}
 For some $c<\infty $, uniformly on $n,\varepsilon ,$
\begin{align*}
\mathbb{P}(\barU_{n}\in B_{K}(\varepsilon ) )\leqslant c n^{-\frac{(d- | K |)m }{2}} \varepsilon ^{\frac{|K|m}{m+\eta }}
\end{align*} 
\end{corollary}
Regarding the dependance in $\varepsilon ,$ the random walk hence comes back to $0$ faster on subspaces with fewer coordinates equal to $0$ (the dependence as $n$ increases is opposite). The most interesting part of the convergence, i.e. where the magnitude is not dominated by coordinates equal to $0$, seems to happen on the domain $H_{\llbracket d\rrbracket }$ of points with non-vanishing coordinates.  More precise results are derived in   \eqref{eq:pn-upper}, in Section  \ref{sec:RW}, dedicated to irrational random walks; the results are derived in particular in terms of the optimal function $\psi $ such that $\bo $'s components are  $\psi $-(BA).

Lower bounds are more unstable, hence we consider the smoothed estimate, for $\beta >2,$
\begin{align}
\label{eq:def-Ibeta} 
\I_{\beta }(\varepsilon )=&\sum_{n\geqslant n_{\varepsilon }}n^{-\beta /2}\mathbb{P}(0< \| \barU_{n} \| <\varepsilon )
\end{align}
where $n_{\varepsilon }\geqslant 1 $ grows sufficiently slowly (see Theorem \ref{thm:dioph-rw}-(ii)). To have matching upper and lower bounds, we assume that for some fixed $ \omega \in \mathbb{R}^{m}$, $ \omega _{[k]}=\omega $ for $ 1\leqslant k\leqslant d,$ and that $\omega $ is  $ \psi $-WA and $\psi $-BA.
We have the following corollary of  Theorem \ref{thm:dioph-rw}-(i),(ii):
\begin{corollary} There are $0<c_{-}<c_{+}<\infty $ such that  $$c_{-}\varepsilon ^{\frac{\beta -2+dm}{m+\eta }}\leqio \I_{\beta }(\varepsilon ) \leqslant c_{+}\varepsilon ^{\frac{\beta -2+dm}{m+\eta }}$$ as $\varepsilon \to 0$.
\end{corollary}

\newcommand{\e}{{\bf e}}

In Section \ref{sec:RW}, similar results (but with different magnitudes in $n$) are actually derived first for the random walk $\U_{n}=\sum_{i=1}^{n}X_{i}$ itself, and  projected on the torus to yield the aforementioned results. 

\subsection{Hyperuniform models}
\label{sec:structure}
\renewcommand{\S}{\mathcal  S}

We have just observed, for some values of the parameter $\bo$, the suppression of the variance at large scales, also called \emph{hyperuniformity phenomenon}. A more general mathematical indicator of hyperuniformity is through  the structure factor, or more generally the behaviour around zero of the Fourier transform of the associated random measure. 
Following \cite{OSST}, we use the integrated structure factor to characterize  hyperuniformity.

\begin{definition}Let $ E$ be a random subset of $ \mathbb{R}^{d}$.  The structure factor of $ E$, when it exists, is the measure $ \mathcal S$ on $ \mathbb{R}^{d}$ defined  through test functions $ \varphi $ smooth with compact support via
\begin{align*}
\int_{\mathbb{R}^{d}}\varphi (\t)\text{\rm{Cov}}(\mathbf{1}_{\{0\in E  \}},\mathbf{1}_{\{\t\in E \}})d\t=\int_{\mathbb{R}^{d}}\hat \varphi (\x)\S (d\x)
\end{align*}
where $ \hat \varphi $ is the classical  Fourier transform of $ \varphi .$
Then say that $ E$ is \emph{hyperuniform} if $ \mathcal S(B_{d}(0,\varepsilon ))\leqslant c_{+}\varepsilon ^{\alpha +d},\varepsilon >0$ for some $ \alpha >0,$ and \emph{strongly hyperuniform} if furthermore $ \alpha>1.$ 
\end{definition}
The precise definition of a random set is not precised here as many existing theories can fit in the previous definition (see for instance \cite{Mol05}).
The hyperuniformity of $ E$ corresponds under some hypotheses to  the suppression of the variance at large scales, i.e. 
\begin{align*}
\lim_{T\to \infty }\frac{ \Leb(E\cap TW)}{T^{d}}\to 0
\end{align*}
for sufficiently regular shapes $ W\subset \mathbb{R}^{d}$,  and the strong hyperuniformity corresponds to a variance of minimal magnitude, proportional to the window boundary measure, i.e. 
\begin{align*}
\sup_{T}\frac{ \Leb(E\cap TW)}{T^{d-1}}<\infty ,
\end{align*} see for instance \cite{OSST}, or the survey in preparation \cite{Coste}.
Our first result is that  natural models of Gaussian fields will not yield hyperuniform excursions.
\begin{proposition}
\label{prop:isotropic}
Let $X$ be some centred stationary Gaussian field on $\mathbb{R}^{d}$ with spectral measure  $\bmu$.
Assume that for some odd integer $n\geqslant 1,\varepsilon >0,x\in \mathbb{R}^{d},c>0,$ $\bmu^{n}(B_{d}(x,\varepsilon ))\geqslant c\varepsilon ^{d}$.
Then for some $c_{-}>0, T$ sufficiently large, $$V_{\bmu}(T)\geqslant c_{-}T^{d}.$$
This is for instance the case if $X$ is isotropic, i.e. if $X$'s law  is invariant under rotations, or equivalently if $\bmu$ is invariant under rotations.
\end{proposition}

The proof requires tools and notation from Section \ref{sec:exc} and is at  Section \ref{sec:isotropic}. As illustrated by the proof, to obtain sublinear variance, the spectral measure's support must have essentially  dimension  smaller than $1$, hence we consider finite atomic support. 
Let $\bmu$ be of the form \eqref{mu:general} with $ \omega _{[k]}\in \mathbb{R}^{m}$ that is $q^{-(m+\eta )}-$(BA) for some $ m\geqslant 1,\eta \geqslant 0,$  for $1\leqslant k \leqslant d$, and $ X_{\bo}$ the Gaussian  field which spectral measure is $\bmu$. 

\begin{proposition}
\label{prop:structure}
 Let $ \alpha =\frac{ 1+d(1-\eta )}{m+\eta }$.
Then $ E=\{X_{\bo}>0\}$ admits a  structure factor $ \mathcal S$ satisfying 
\begin{align*}
\mathcal  S(B_{d}(0,\varepsilon ))\leqslant c_{+}\varepsilon ^{d+\alpha } ,\varepsilon \to 0.
\end{align*}
Hence if $\eta <1+\frac{ 1}{d}$,  $ E$ is hyperuniform, and if $\eta \leqslant 1-\frac{ m}{d+1}$, $ E$ is strongly hyperuniform.

\end{proposition}

 The proof is at Section \ref{sec:prf-structure}.
 If the $ \omega _{[k]}$ are $ \psi $-(SWA) (Simultaneously Well Approximable, see Section \ref{sec:RW}), which is the case if for instance the $ \omega _{[k]}$ are all equal to a $ \psi $-WA tuple $ \omega \in \mathbb{R}^{m}$, the right hand side is optimal, see \eqref{eq:Jb0-lower}.
We give an approximate representation in Figure \ref{fig:1} in a special case. 
\begin{center}
\begin{figure}[h!]
\caption{Structure factor for $d=2,m=1,\omega_{[1]}=\omega _{[2]} =\sqrt{2},$  i.e. $\bmu=\bar \delta _{\be_{1}}+\bar \delta _{\sqrt{2}\be_{1}}+\bar \delta _{\be_{2}}+\bar \delta _{\sqrt{2}\be_{2}}$}
\label{fig:1}
 \includegraphics[scale=.5]{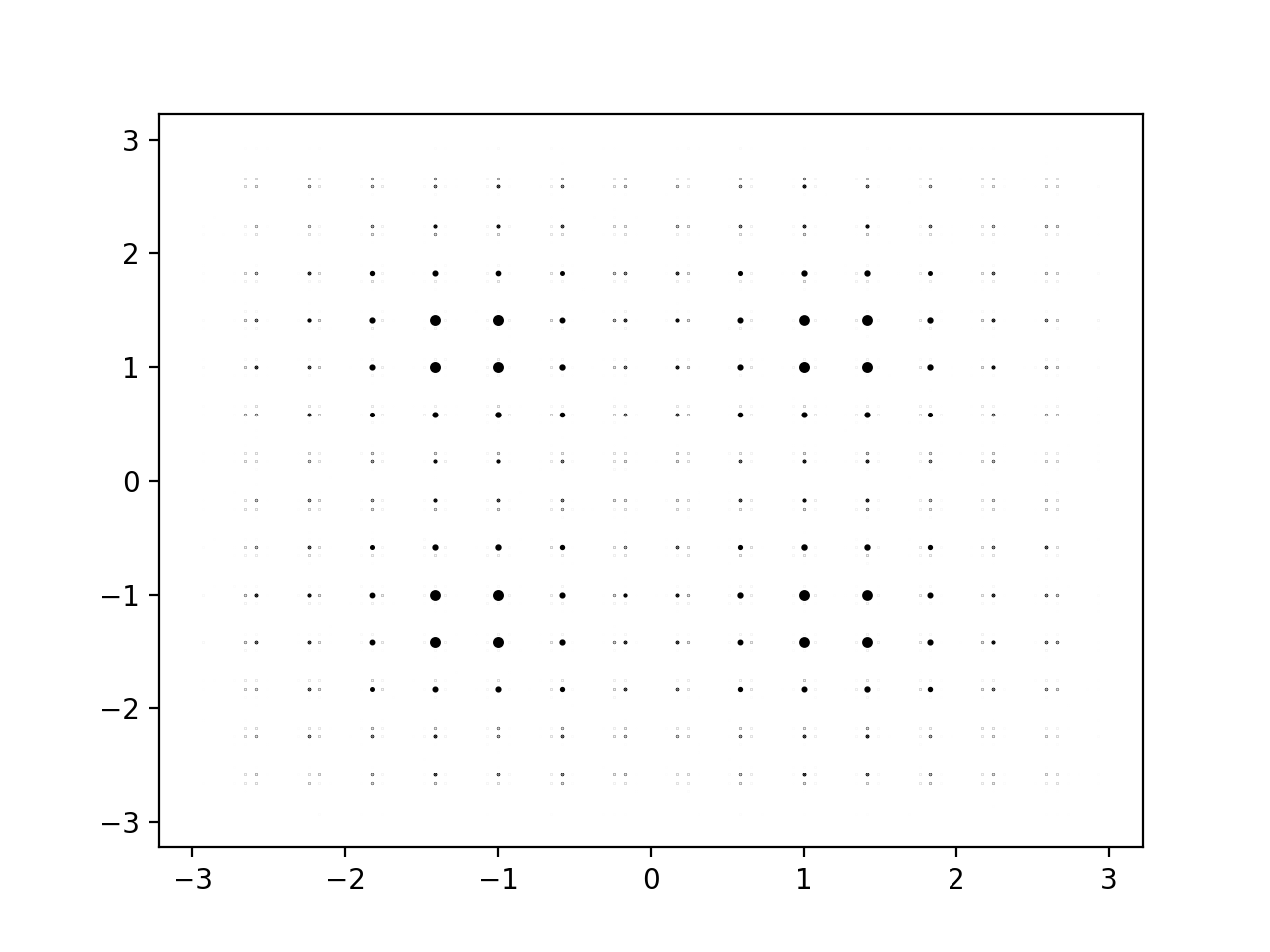} 
\end{figure}
\end{center}
This kind of spectrum is reminiscent of Bragg peaks in quasi-crystals \cite{Tor16}, and more generally of almost periodic fields, for which we give a definition here: a field $X:\mathbb{R}^{d}\to \mathbb{R}$ is {\it almost periodic} if for any sequence of vectors $\t_{n}\to \infty $, there is a subsequence $\t_{n'}$ such that $\|X-X(\t_{n'}+\cdot )\|_{\infty }\to 0$, see for instance the monograph \cite{CG89}. Covariance functions and random Gaussian fields considered in this paper are $\|\cdot \|_{\infty }$-limits of trigonometric polynomials, and as such they are almost periodic. On the other hand, their excursions, seen as $\{0,1\}-$valued functions, are not almost periodic in this sense, mainly because of the discontinuities at the set boundary. On the other hand, they are likely almost periodic for weaker norms, and could hence be seen as {\it almost periodic sets}.

 \subsection{Variance cancellation for Gaussian random waves}
\label{sec:intro-waves}

Let us give another (and unrelated) application of Theorem \ref{thm:general-var} in the context of ergodic isotropic fields.  This example is mainly derived to illustrate the sensibility of Theorem \ref{thm:general-var}, and could likely be deduced from (more precise)   results  on the sphere by Rossi \cite{Ros19}.

Let $d\geqslant2,\bS^{d-1}=\{\t\in \mathbb{R}^{d}:|\t|=1\}$ the  $d$-dimensional unit sphere and $\bmu _{d}$ the Haar distribution on the sphere, i.e. the unique probability measure on $ \mathbb S^{d-1}$ invariant under rotations. Let $X_{d}(\t)$ be a centred Gaussian random field with spectral measure $\bmu _{d}$ and reduced covariance 
\begin{align}
\label{eq:rpw-cov}
C_{d}(\t)=\int_{\bS^{d-1}}\exp(-\imath \t\cdot \x)\bmu _{d}(d\x)=c_{d} \frac{ \mathsf B_{\frac{ d}{2}-1}(  \| \t \| )}{  \| \t \|  ^{\frac{ d}{2}-1}} ,\t\in \mathbb{R}^{d}
\end{align}
for some $c_{d}>0$,
see \cite[(21)]{GikSko}, where $\mathsf B_{\frac{ d}{2}-1}$ is the Bessel function of the first kind (see also Example   \ref{ex:sphere}).

The field $ X_{d}$, called {\it Berry's random wave model}, is of central importance as it is the unique stationary isotropic field satisfying $\Delta X_{d}=-X_{d}$ a.s. \cite{NPR}. 
It can be seen as a local approximation of random  eigenfunctions of the Laplacian on compact $ d-$dimensional smooth manifolds, of high interest in the physics literature.  Since nodal statistics are local quantities, it makes sense to expect analogies between the behaviours of random waves  on different smooth manifolds as $ T\to \infty .$ Such random Gaussian harmonics have been recently heavily studied in dimension $ 2$ in the mathematics literature, especially on the sphere or the torus \cite{KKW13,MarWig,EL16,MPRW}, and the Euclidean version $ X_{d}$ has also been investigated, through its percolation properties \cite{MRV} or the statistical properties of its nodal lines \cite{NPR,EL16}. An interesting feature of Gaussian random harmonics is the variance cancellation phenomenon, i.e. the very small asymptotic fluctuations of some statistics of the excursion set at the level $ u=0$ in the high energy limit, compared to other levels $ u\neq 0$. First conjectured by Berry \cite{Ber02} for the length of the excursion boundary on the torus ({\it nodal lines}), it has then been observed and deeply analysed in several other instances \cite{KKW13,MPRW}.

We prove here that a variance cancellation at the level $ u=0$ also occurs for the nodal excursions of the Euclidean Gaussian random waves in any dimension. More specifically, while the excursion volume is overfluctuating for levels $ u\neq 0$ (i.e. the volume of large windows is negligible with respect to the variance of the excursions of $X_{d}$ restricted to this window), the fluctuations are linear for $ u=0$, as would be the case for  fluctuations of short range random fields such as the Bargmann-Fock field (see Example \ref{ex:BF-field}).  For $ T>0$, the rescaled version $ X_{d,T}( \t)=X_{d}( {T}\t)$ satisfies $ \Delta X_{d,T}=-T^{2}X_{d,T}$ and hence can be compared to random harmonics with same wavelength on compact manifolds.
\begin{theorem}
\label{thm:var-cancel}Denote by $V^{u}(T)$ the variance of $\Leb[d](B_{d}(0,1)\cap \{X_{d,T}>u\})$.
For $u\neq 0$, there is $c_{u}>0$ such that
\begin{align*}
c_uT^{1-d}\leqslant V^{u}(T),T>0
\end{align*}
and there is $0<c_{-}\leqslant c_{+}<\infty $ such that 
\begin{align*}
c_{-}T^{-d}\leqslant V ^{0}(T)\leqslant c_{+}T^{-d}, T>0.
\end{align*}
\end{theorem}

The proof is given in Section \ref{sec:proof-var-cancel}. This result can be compared with similar results on the sphere, see
 the work of Marinucci and Wigman in dimension $ 2$ \cite{MarWig}, and then of Rossi \cite{Ros19} in dimension $ d\geqslant 2$, who study the excursion volume (also called {\it defect volume} after centering) of spherical Gaussian harmonics $X$ satisfying $ \Delta_{\mathbb  S^{2}} X=-\ell( \ell+d-1)X,\ell\in \mathbb{N}$, where $\Delta_{\mathbb  S^{2}} $ is the Laplace-Beltrami operator on the sphere. They also obtain a variance of magnitude $ \ell^{1-d}$ at levels $ u\neq 0$ and $ \ell^{-d}$ at the level $ u=0$, echoing experimental results from Blum, Gnutzmann and Smilansky \cite{BGS}. 
Hence the present results are  consistent with those obtained on the sphere.

\begin{remark}Various cancellation phenomena have been explained by the cancellation of the second order Wiener chaos of the corresponding functional, see the seminal work of Marinucci et al.\cite{MPRW}. The proof of the previous result is based on spectral analysis with the spectral measure $\bmu$ of the Gaussian field, emphasizing the fact that the support dimension for low order powers $\bmu ^{2},\bmu ^{3}$ of the spectral measure is crucial in understanding the variance behaviour, it hence sheds a different light on this variance cancellation phenomenon.

\end{remark}

\section{General variance estimates}
\label{sec:exc}

Let $X$ be some real centred stationary Gaussian field on $\mathbb{R}^{d}$, denote its spectral measure by $\bmu$ and its reduced covariance function  by $C$ (see Section \ref{sec:intro-Gauss}). 
We study here the  statistic
\begin{align*}
M_{u}^{\gamma }=\int_{\mathbb{R}^{d}}\mathbf{1}_{\{X(\t)>u\}}\gamma ( \t)d\t
\end{align*}
where  $u\in \mathbb{R}$ and $\gamma $ is some measurable {\it window function}, bounded with compact support with non-empty interior. Define
\begin{align*}
\hat{\gamma }(\x)=\int_{\mathbb{R}^{d}}e^{-i \t \cdot \x   }\gamma (\t)d\t,\x\in \mathbb{R}^{d},
\end{align*} and $\gamma _{T}(\t)=\gamma (T^{-1}\t),T>0$. 
The  variance is
\begin{align*}
V^{\gamma,u }_{\bmu }(T):=\Var( M_{u}^{\gamma _{T} } ) 
\end{align*}
and we use the shortcut notation $V_{\bmu}^{\gamma }:=V_{\bmu}^{\gamma ,0}$.
The most prominent example is the unit sphere indicator $\gamma^{d} :=\mathbf{1}_{\{B_{d}(0,1)\}}(\cdot )$, and in this case $\gamma^{d} $ is also implicit in the notation $V^{\gamma ^{d},u}_{\bmu}=V_{\bmu}^{u},V^{0}_{\bmu}=V_{\bmu}.$

Introduce the notation $$A=\Theta B$$ for two quantities $A,B$ to mean that $c_{-}A\leqslant B\leqslant c_{+}B$ for some $0< c_{-}\leqslant c_{+}<\infty $, on their domains of definition. Without loss of generality, we use the convention 
\begin{align*}
C(0)=\bmu(\mathbb{R}^{d})=1
\end{align*}
as it allows to adopt the probability formalism and eases certain arguments. Denote by $\U_{n}$ the random walk which increment has law $\bmu$ (i.e. $\U_{n}$'s law is $\bmu^{n}$).
Define the function 
\begin{align*}
\K(\varepsilon )=\sum_{n\in \mathbb{N}\text{\rm{ odd}}}\frac{{n\choose 2n}}{4^{n}(2n+1)}\mathbb{P}( \| \U_{n} \| \leqslant \varepsilon ).
\end{align*}
  Say that $\bmu$ is $\mathbb{Z} -$free if $\mathbb{P}(\U_{2n+1}=0)=0$ for $n\in \mathbb{N}$. For $r>0$, denote by   $c_{r}^{+},c_{r}^{-}$ respectively the supremum and infimum of $ \| \hat\gamma (\x) \|^{2} $ for $\x\in B_{d}(0,4 r).$ Notice that $c_{r}^{-},c_{r}^{+}\to  | \hat \gamma (0) | ^{2}>0$ as $r\to 0.$

\begin{theorem}
\label{thm:general-var}Assume $\bmu$ is $\mathbb{Z} -$free and let $r>0$.
\begin{itemize}
\item [(i)]  For $T>0$
\begin{align}
\label{eq:var-lower}
c_{r}^{-}T^{2d}\K (rT^{-1})\leqslant V_{\bmu }^{\gamma }(T).
\end{align} 
\item [(ii)] If in addition for some $c_{1}<\infty $, $|\hat\gamma (\x)|\leqslant c_{1}  \|\x\| ^{-\frac{d+1}{2}}$ for $\|\x\|>4r ,$
then 
\begin{align}
\label{eq:var-upper}
V_{\bmu }^{\gamma }(T)\leqslant  c_{r}^{+}T^{2d}\K  (rT^{-1})+c_{1}T^{d-1}\int_{0}^{(T/r)^{d+1} }\K (y^{-\frac{1}{d+1}})dy .
\end{align} 
In particular if $\gamma =\gamma ^{d}$ and $\K(\varepsilon )=\Theta \varepsilon ^{\alpha }$ as $\varepsilon \to 0$ for some $\alpha >0$, then $\alpha \leqslant d+1$ and $$V_{\bmu } (T)=\Theta T^{2d-\alpha },T>0.$$
\item[(iii)] For $u\neq 0,T>0$,
\begin{align*}
V_{\bmu}^{\gamma,u}(T)\geqslant 2^{2d} c_{r}^{-}\alpha _{2,u} T^{2d}\mathbb{P}( \| \U_{2} \| <rT^{-1}).
\end{align*}
In particular if $\bmu $ has an atom at $\x_0$,  letting $r\to 0$ yields a quadratic variance
\begin{align*}
0<2^{2d} | \hat\gamma (0) | ^{2}\alpha _{2,u}\bmu(\{\x_0\})^2T^{2d}\leqslant V_{\bmu}^{\gamma,u}(T)\leqslant T^{2d}.
\end{align*}
\end{itemize}
\end{theorem}

 The proof, deferred to Section \ref{sec:prf-main-var}, is based on an exact local formula for the variance of Gaussian excursions volume.
  This expression is then decomposed in two terms through a truncation, the first one is proportional to $T^{2d}\K(rT^{-1})$.
   The second term  is upper bounded in the following way, 
\begin{align*}
\int_{0}^{(T/r)^{d+1} }\K (\lambda (y))dy \leqslant c\int_{0}^{(T/r)^{d+1} }\K (cy^{-\frac{1}{d+1}})dy 
\end{align*}
where $\lambda $ is a pseudo-inverse of $\hat \gamma ^{2}$, giving the second term on the right hand side of \eqref{eq:var-upper}. Since $\hat\gamma $ usually experiences oscillations at $\infty $ (see for instance \eqref{eq:gamma-sphere}), obtaining a simple asymptotic equivalent of the left hand term requires more involved computations, but doing so would provide an accurate lower bound on the variance. 

The second term has the same magnitude than the first term in (at least) three very different settings: (a) when $\bmu$ has finitely many incommensurate  atoms (Theorem \ref{thm:dioph-rw} and Proposition \ref{prop:psi-regular},
   (b) when $\bmu$ is the Haar measure on the unit sphere (Theorem \ref{thm:var-cancel}), and (c) when $\bmu$ is the Fourier transform of an integrable function (Example \ref{ex:BF-field}); see also the end of Section \ref{sec:prf-main-var} for a general argument. 

\begin{remark}The fact that if $\K(\varepsilon )=\Theta \varepsilon ^{\alpha }$ as $\varepsilon \to 0$ for some $\alpha >0$  then $\alpha \leqslant d+1$ (point (ii)), is a  non-trivial fact about random walks on $\mathbb{R}^{d}$ without any assumption on the increment measure, we don't know if this fact is known, or useful, in the study of random walks.
\end{remark}

 \begin{remark}Theorem 2.1 requires almost no hypothesis on the model, except that the random walk does not have a positive probability to come back to $0$ in an odd number of steps ($\bmu $ is $\mathbb{Z} $-free). This is also valid for the result about random walks in the remark above. To have an upper bound in (ii), we still need to ensure that the observation window is smooth enough, through a decay hypothesis on its Fourier transform. 
\end{remark}

\begin{example}
\label{ex:sphere}
For the unit sphere indicator,
we have the classical formula (\cite[Chap. 1.5]{GikSko})
\begin{align*}
\hat\gamma ^{d}(\x)=\kappa _{d}\|\x\|^{-d/2}\mathsf B_{d/2}(\|\x\|)
\end{align*}
where $\kappa _{d}=\Leb[d](B_{d}(0,1))$ and $ \mathsf B_{a}$ is the  Bessel function of the first kind with parameter $a$
\begin{equation*}
\mathsf B_a(r)=\sum_{m=0}^{\infty }\frac{(-1)^{m}}{m!\Gamma (m+a+1)}\left(
\frac{r}{2}
\right)^{2m+a},r\geqslant0.
\end{equation*}
 In particular, $\hat\gamma ^{d}(\x)\sim \kappa _{d}\Gamma (d/2+1)^{-1}>0$ in $0$ and 
\begin{align}
\label{eq:gamma-sphere}
\hat \gamma ^{d}(\x)\sim \kappa _{d}(2/\pi )^{1/2}\|\x\|^{-\frac{d+1}{2}}\cos(\|\x\|+\Delta  _{d})
\end{align}as $\x\to\infty $, for some $\Delta _{d}\in \mathbb{R}.$  It is known \cite{Elbert01} that the first zero of $ \mathsf B_{a},a\geqslant 1/2$ is larger than the first zero of $ \mathsf B_{1/2}$, which  is $\pi $ , hence we can take $r=\frac{1}{2}$ in Theorem \ref{thm:general-var}.
\end{example}

\begin{example}
\label{ex:BF-field}
The most studied   Gaussian fields are probably  those with an integrable reduced covariance function 
\begin{align*}
\int_{ \mathbb{R}^{d}}C(\t)d\t<\infty ,
\end{align*}
such as the Bargmann-Fock field, where $ C(\t)=e^{-\t^{2}}$. Let us emphasise that the following result is far from new, but, along with Theorems \ref{thm:main-intro} and \ref{thm:var-cancel}, it illustrates the variety of situations where Theorem \ref{thm:general-var} is accurate.

\begin{proposition}
\label{prop:integrable}
Let $ \bmu$ be a spectral measure with integrable covariance.
We have $$ V_{\bmu}(T)=\Theta T^{d},T>0.$$
\end{proposition}
\begin{proof}

The integrability of $ C$ yields that $ \bmu$ admits a continuous bounded density function with respect to $ \Leb$, denoted by $ \hat C$, satisfying $\|\hat C\|_{L^{1}}=1$.  Hence, denoting by $\hat C^{*n}$ the $n$-fold self convolution of $\hat C,$
\begin{align*}
\mathbb{P}(\|\U_{n}\|\leqslant \varepsilon )=\int_{}\mathbf{1}_{\{B_{d}(0,\varepsilon )\}}(\z)\bmu ^{n}(d\z)\leqslant \Leb(B_{d}(0,\varepsilon ))\|\hat C^{*n}\|_{\infty }\end{align*}
and classical properties of the convolution product yield that $$ \|\hat C^{*n}\|_{\infty }\leqslant \|\hat C\|_{\infty }\|\hat C\|_{L^{1}}^{n-1}=\|\hat C\|_{\infty }, $$ hence $$\K(\varepsilon )\leqslant \|\hat C\|_{\infty }\left(
\sum_{n}\frac{{n\choose 2n}}{4^{n}(2n+1)}
\right)\varepsilon ^{d}.$$ Theorem \ref{thm:general-var}-(ii) then gives the upper bound.

For the lower bound, recall that $\hat C$ is continuous and semi definite positive, and let $ \x\in \mathbb{R}^{d},r >0$  be such that $ \hat C>0$ on $ B_{d}(\x,r)$. It is then easy to show by induction that $ \hat C^{*n}>0$ on $ B_{d}(\x,nr)$, hence for $ n\geqslant r^{-1}(|\x|+1 )$ and $\varepsilon \leqslant 1$, $ B_{d}(0,\varepsilon )\subset B_{d}(\x,nr)$ and 
\begin{align*}
\mathbb{P}( \| \U_{n} \| \leqslant \varepsilon )=\int_{ B_{d}(0,\varepsilon )}C^{*n}(\t)d\t \geqslant  c'\varepsilon ^{d}
\end{align*} for some $ c'>0$, Theorem  \ref{thm:general-var}-(i) concludes the proof.
\end{proof}

\end{example}

  \section{Irrational  random walks}
  \label{sec:RW}
  
  We consider a random walk in $\mathbb{R}^{d}$ which increment measure $\bmu$ is symmetric with finite support. For technical reasons, it is simpler to assume that $\bmu$ has atoms along some $d$ linearly independent unit vectors $\be_{1},\dots ,\be_{d}$, with the same number of atoms in each direction: for some $m\geqslant 1$, let $\bo =(\omega _{[k],i})_{\substack{\text{$1\leqslant k\leqslant d$}\\\text{$1\leqslant i\leqslant m$}}}\in (\mathbb{R}^{m})^{d}$ and $ \bmu$ be of the form \eqref{mu:general} with $ \omega _{[k],0}=1$ by convention.
We are interested in the associated random walk 
\begin{align*}
\U_{n}:=\sum_{k=1}^{n}X_{k} 
\end{align*}
where the $X_{k}$ are independent and identically distributed with law $\bmu $, hence centred.
The study of $\U_{n}$ is related to the random walk on the torus
\begin{align*}
\overline{\U}_{n}=\U_{n}-[\U_{n}]\in [0,1[^{d},
\end{align*}
which has been intensively studied, the consequences of the current results to the random walk on the torus are discussed at  Section \ref{sec:intro-walk}.  
To avoid degenerate behaviour, we assume that $\bmu$ is   $\mathbb{Z} $-free, i.e. there is no odd $\q \in \mathbb{Z} ^{M}\setminus \{0\}$ such that $\sum_{i=1}^{M}\q_{i}\bo_{i}=0$, where $ M=(m+1)d$. In general we further assume that the $\omega _{[k]}$ are $\psi $-BA for some   non-vanishing function $\psi $, which automatically implies that $\bmu$ is $\mathbb{Z} $-free.

\newcommand{\N}{{\bf N}}

According to the Central Limit Theorem, the law of the renormalised sum $n^{-1/2}\U_{n}$ weakly converges to a Gaussian measure (see also Lemma \ref{lm:Gauss-approx} for  precise estimates), and the law $\overline{\bmu _{n}}$ of $\overline{\U}_{n}$ is known to converge to Lebesgue measure on $[0,1[^{d}$ \cite{Su98}. But  if we zoom in further on this convergence around $0$, it becomes very erratic.
\renewcommand{\x}{{\bf x}}
 We estimate the following quantities below:\begin{align*}
p_{n}^{\x }(\varepsilon )=&\mathbb{P}(0< \| \U _{n}-\x \| \leqslant \varepsilon ),\varepsilon > 0,\x\in \mathbb{Z}^{d} ,\\
\bar p_{n}(\varepsilon )=&\sum_{\x\in \mathbb{Z}^{d} }p_{n}^{\x}(\varepsilon )=\mathbb{P}(0< \| \overline{\U}_{n} \| \leqslant \varepsilon ).
\end{align*}

\begin{remark}
In general, if the sum $n+\sum_{k=1}^{d}\x_{[k]}$ is even, $\mathbb{P}(\U_{n}=\x )$ is in $n^{-\frac{M}{2}}$ and dominates $p_{n}^{\x}(\varepsilon )$ for $\varepsilon \to 0$, which is why it is estimated  separately. For odd values, since $\bmu$ is $\mathbb{Z} $-free, $\mathbb{P}(\U_{n}=\x)=0$, hence $p_{n}^{\x}(\varepsilon )$ is simply $\mathbb{P}(\|\U_{n}-\x\|\leqslant \varepsilon )$.
A fine analysis of the recurrence around $0$ yields that the  rate strongly depends on the number of coordinates equal to $0$, expressed through  $$p_{n}^{K,\x}(\varepsilon )=\mathbb{P}(\U_{n}-\x\in B_{K}(0,\varepsilon )),\;\bar p_{n}^{K}(\varepsilon )=\mathbb{P}(\barU_{n}\in B_{K}( 0,\varepsilon )).$$

We show below for instance that if along each direction $ k$ of $\mathbb{R}^{d}$, $\bmu$'s support is made up of  a vector $( \omega _{[k],i})_{1\leqslant i\leqslant m}$ which  is $q^{-(m+\eta )}$-(BA) for some $m\geqslant 1$, $\eta \geqslant 0$, then  for some $c<\infty ,$ 
\begin{align*}
\bar p_{n}^{K}(\varepsilon )\leqslant c  n^{-m\frac{d-|K|}{2}}\varepsilon ^{ \frac{m| K| }{m+\eta }}, \;\;\;K \subset \llbracket d\rrbracket ,n\in \mathbb{N},0<\varepsilon <\frac{ 1}{2}, 
\end{align*}
so that it is really the number of vanishing coordinates that determines the recurrence probabilities. 
\end{remark}

To avoid the technicality mentioned in the previous remark and  obtain lower bounds, we consider the smoothed estimates for $\beta >0,$ with $p_{n}^{\o}=p_{n}^{\llbracket d\rrbracket ,\o}$
\begin{align*}
\J_{\beta }(\varepsilon  ):=&\sum_{n\geqslant n_{\varepsilon },n\in \mathbb{N},n\text{\rm{ odd}} }n^{-\beta/2 }p_{n}^{\o }(\varepsilon )
\end{align*}
where $n_{\varepsilon }$  does not grow too fast as $\varepsilon \to 0$ and serves the purpose to show that  it is the series  tail that actually matters.

 \begin{remark}
 Considering this statistic also allows to suppress the erratic behaviour in $n$, and we can prove that $\J_{\beta }(\varepsilon )$ and $\I_{\beta }(\varepsilon )$ (defined in the introduction at \eqref{eq:def-Ibeta})  both behave in $\varepsilon ^{ \frac{m}{m+\eta } }$, and find a matching lower bound.
The summation over odd $n$  in $\J_{\beta }(\varepsilon )$ is adapted to   estimating the volume variance of Gaussian nodal excursions (see Remark \ref{rk:psi-SWA}).  
\end{remark}
If the atoms are different in different directions, we need to generalise the concept of $ \psi $-WA: 
say that $\bo=(\omega _{[k]})_{1\leqslant k\leqslant d}$ is $\psi $-SWA*  if for some $c>0$,  for infinitely many $q^j \in \mathbb{Z} ^{m}  ,j\geqslant 1,$ there exist $p^{j}_{[k]}\in \mathbb{Z},1\leqslant k\leqslant d $ such that 
\begin{align*}
 | p^{j}_{[k]}-\omega _{[k]} \cdot q^j | <c\psi ( | q^j  | ),1\leqslant k\leqslant d.
\end{align*}Say that $ \bo$ is $ \psi $-SWA 
 if furthermore $
\sum_{k=1}^{d}(p^{j}_{[k]}+\sum_{i=1}^{m} q^{j}_{i})$ is odd. The need to distinguish between $\psi $-SWA and $\psi $-SWA*   is discussed in Remark \ref{rk:psi-SWA}.

\begin{theorem}
\label{thm:dioph-rw}
Let $\psi $ be  some mapping $\mathbb{N}_{*}\to (0,1]$ converging to $0$, and let $\psi ^{-1}$ be its pseudo-inverse defined by 
\begin{align}
\label{eq:pseudo-inv}
\psi ^{-1}(\varepsilon )=\min\{q\in \mathbb{N}^{*}: \psi (q )\leqslant \varepsilon \},\varepsilon >0.
\end{align} 
Let $ \beta >0 $. There is  $0<c<\infty $ depending on $d,m,\psi ,\beta $ such that the following holds:\\
(i)
Assume each $\omega _{[k]}$ is $\psi  $-BA. We have for $\x\in \mathbb{Z} ^{d},K\subset \llbracket d\rrbracket,0<\varepsilon < \frac{1}{2},n\in \mathbb{N}^{*}$
\begin{align}
\label{eq:pn0-upper}p_{n}^{K,\x}(\varepsilon ) \leqslant &c  n^{-d/2}n^{-\frac{(d-|K|)m}{2}}\psi^{-1}(\varepsilon )^{ -m |K|}\exp(-c n^{-1}\|\x\|^{2})\\
\label{eq:pn-upper} \bar p^{K}_{n}(\varepsilon ) \leqslant &cn^{\frac{-(d-|K|)m}{2}}\psi^{-1}(\varepsilon )^{- m |K|}\\
\label{eq:Ib-upper} \I_{\beta }(\varepsilon )\leqslant& c\psi^{-1}(\varepsilon )^{- \beta -d m+2  }\\
\label{eq:Ib0-upper} \J_{\beta }(\varepsilon )\leqslant& c\psi^{-1}(\varepsilon )^{ -\beta -d(m +1)+2 }\end{align}

(ii) Assume $\bo $ is $\psi $-SWA*. Then if $n_{\varepsilon }\leqslant \psi^{-1}(\varepsilon )^{2},\varepsilon >0,$
\begin{align}
\label{eq:Ib-lower} \psi^{-1}(\varepsilon )^{-\beta -dm+2 }\leqio c\I_{\beta }(\varepsilon ),\varepsilon \to 0.
\end{align}

(iii) Assume $\bo$ is $\psi  $-SWA . Then if $n_{\varepsilon }\leqslant \psi^{-1}(\varepsilon )^{2},\varepsilon >0,$
\begin{align}
\label{eq:Jb0-lower} \psi^{-1}(\varepsilon )^{-\beta -d(m+1)+2 }\leqio  c\J_{ \beta }(\varepsilon ),\varepsilon \to 0.
\end{align}

\end{theorem}

This result is useful for studying Gaussian excursions, but as explained at Section \ref{sec:intro-walk}, it independently yields new estimates related to random walks on the torus.

\begin{example}
\label{ex:standard}
An example intensively used in this article is $\beta =3, \psi (q)=cq^{-(m+\eta ) },\eta \geqslant 0, \varepsilon =T^{-1}$ for some large $T$, for which 
\begin{align}
\label{eq:psi-1-power}
\psi ^{-1}(T^{-1} )^{-\beta -d(m+1)+2}=c'T ^{-\frac{ 1+d(m+1)}{m+\eta }},\varepsilon >0.
\end{align}
\end{example}

\begin{remark}
\label{rk:psi-SWA}The assumption that $\bo$ is $\psi $-SWA is stronger than $\psi $-SWA*, and also less natural, which might cause confusions. The reason why $\bo$ has to be $\psi -$SWA  instead of   $\psi -$SWA* at point (iii) is because summands are odd in the definition of $\J_{\beta }$. The assumption $\psi $-SWA*  and function $\I_{\beta }$ are introduced only because they are more natural in the context of random walks, but they are useless for giving lower bounds for Gaussian excursions variances.  As supported by Proposition \ref{prop:psi-SWA*}, this subtlety does not influence final results about Gaussian excursions, hence one would like a general result from diophantine approximation that states that $\psi -$SWA* tuples are also $\psi -$SWA, but that is most likely not true.
\end{remark}

In the next section, results of the two previous sections are combined to yield estimates for the volume variance of Gaussian random fields which spectral measure is made of incommensurate atoms. One can also use the next section to have example behaviours of irrational random walks.

\section{Variance asymptotics for diophantine Gaussian excursions}
\label{sec:intro-Gauss-dioph}

 We consider   symmetric spectral measures whose support contains incommensurate atoms. For $\bo\in (\mathbb{R}^{m})^{d}$, denote by $X_{\bo}$ a centered stationary Gaussian random field which spectral measure is $\bmu $, parametrised by $\bo$ as  in  \eqref{mu:general}.
The excursion volume variance is denoted by
\begin{align*}
V_{\bo }(T):=\Var(\Leb(\{X_{\bo }>0\}\cap B_{d}(0,T))).
\end{align*}

\subsection{Regular asymptotics}

 We will use the assumptions that $\bo$ is $\psi $-BA and / or $\psi $-WA (Section \ref{sec:intro-Gauss}) with functions $ \psi $ of the following form : \begin{definition}
 \label{def:regular}
 Say that $\psi:\mathbb{N}^{*}\to (0,1]$ is \emph{absolutely regularly varying} (ARV) (or $\tau $-ARV) if it is   of the form $\psi (q)=q^{-\tau }L(q),q\in \mathbb{N}^{*},$ where $\tau >0$   and $L$ is \emph{ absolutely slowly varying} (ASV), i.e. it does not vanish and as $q\to \infty ,$ 
\begin{align}
\label{eq:srv}\frac{ L(q+1)}{L(q)}=1+o(q^{-1}).
\end{align}
\end{definition}
 In the classical Karamata Theory \cite[(1.2.1)]{RegVar}, a function $f:[a,\infty [$ is {\it slowly varying} (SV) if $f(qx)/f(x)$ converges to $1$ as $x\to \infty $ for all $q\in \mathbb{R}$. We extend naturally this definition to a function $L:\mathbb{N}\to \mathbb{R}$ by  
\begin{align}
\label{eq:sv}
\frac{ L([aq])}{L(q)}\xrightarrow[q\to \infty ]{}\;1, \text{ for all }a>0,
\end{align}
where $[aq]$ is the integer part of $aq.$
The terminology  ASV is motivated by  the following result, proved at Section \ref{sec:rv}, studying the relation with SV.

\begin{proposition}
\label{prop:rv}
If $L$ is ASV, then $L$ is SV. The converse is true if $L$ is non-decreasing or non-increasing, otherwise it might not hold.
\end{proposition}

 We  introduce the pseudo-inverse $\psi ^{-1}:(0,1]\mapsto \mathbb{N}$ by \eqref{eq:pseudo-inv}.  We can show that if $\psi $ is ARV, for every  finite $r>0$ there are finite $c_{i}>0$ such that  $c_{1}\psi (q)\leqslant \psi (rq)\leqslant c_{2}\psi (q)$ and $c_{3}\psi ^{-1}(\varepsilon )\leqslant \psi ^{-1}(r\varepsilon )\leqslant c_{4}\psi ^{-1}(\varepsilon )$ on their domains of definition.
Remark that $q\psi (q)$ is strictly non-increasing for sufficiently large $q$ if $\tau>1$. 
 
Our most precise and general result concerns the case where the frequencies $\omega _{[k]}$ of $\bmu$ are the same in all $d$ directions, i.e. $\omega _{[k],i}=\omega _{[1],i},$ it yields stationary random sets  in $\mathbb{R}^{d}$  with any reasonable asymptotic prescribed variance behaviour.

\begin{theorem} \label{thm:main-intro}
Let $\tau >0,\psi$ $\tau $-(ARV)  and $\omega \in \mathbb{R} ^{m}$ that is $\psi $-WA  and $\psi $-BA, and  $\bo:=(\omega ,\omega ,\dots ,\omega )\in  (\mathbb{R}^{m})^{d}$. Then with $\tau ^{*}=\frac{1+d(m+1)}{1+d}$, as $T\to \infty ,$ 
\begin{align*}
\frac{c_{-}T^{2d}}{\psi ^{-1}(T^{-1})^{1+d(m+1)}}\leqio V_{\bo}(T)\leqslant &\frac{c_{+}T^{2d}}{\psi ^{-1}(T^{-1})^{1+d(m+1)}}=o(T^{2d})&\text{\rm{ if }}&\tau >\tau ^{*}\\
c_{-}T^{d-1}\leqio V_{\bo}(T)\leqslant &c_{+}T^{d-1}\ln(T)&\text{\rm{ if }}&\psi (q)=q^{-\tau ^{*}}\\
c_{-}\frac{ T^{d-1}}{\ln(T)^{\alpha }}\leqslant V_{\bo}(T)\leqslant& c_{+}T^{d-1}&\text{\rm{ if }}&\psi (q)\geqslant q^{-\tau ^{*}}\ln(q)^{\frac{ 1}{d}}
\end{align*}
for some $0<c_{-}\leqslant c_{+}<\infty $ depending on $d,m,\psi $, and $\alpha =\frac{ 1+d(m+1)}{d\tau ^{*}}$. If $m=1$ and $\tau >1$, there are uncountably many $\omega \in \mathbb{R}$ satisfying the assumption.
\end{theorem}

The proof is in Section \ref{sec:proof-main-psi}.
For $ m\geqslant 2$, most studies  concern power functions $ \psi (q)=q^{-\tau }$ and are discussed in Proposition \ref{cor:intro}, but can likely be extended to more general functions $ \psi $.

\begin{remark}
 
The presence of the term $T^{d-1}$ on the right hand side, proportional to the surface measure of the observation window, is natural as  random stationary measures applied to a large window are usually not expected to have  a variance behaviour lower than the boundary measure.  No rigorous general result in this direction is known by the author, Beck \cite{Beck87} gives a formal proof in the case of point processes. See also  \cite{Tor18}, which classifies hyperuniform behaviours in three types: type I have asymptotic variance in $T^{d-1}$, type II in $T^{d-1}\ln(T)$, and type III gathers all other sublinear behaviours, which actually correspond to the three cases above. Intermediate behaviours between $T^{d-1}$ and $T^{d-1}\ln(T)$ can likely be obtained by the same method.

It is likely that the upper bound in $T^{d-1}\ln(T)$ is sharp if $\psi (q)=q^{-\tau ^{*}}$, proving it rigorously would require a lower bound for $v_{T}^{(2)}$ in the proof of Theorem  \ref{thm:general-var} at Section \ref{sec:prf-main-var}, which raises some technical difficulties because of the cosine term.

\end{remark}

\begin{remark}
This type of behaviour is really specific of nodal excursions. The volume variance for excursions $\{X>u\}\cap B_{d}(0,T)$ always behaves in $T^{2d}$ if $u\neq 0$, see Theorem \ref{thm:general-var}-(iii). The phenomenon of variance cancellation at $u=0$ is heavily documented for Gaussian random waves (see Section \ref{sec:intro-waves}).
\end{remark}

\subsection{Power functions}
 
Prominent examples are provided by power functions, for which we introduce a special notation: say that $\omega\in \mathbb{R}^{m} $ is $(\tau) $-BA (resp. $(\tau) $-WA) if it is $cq^{-\tau }$-BA (resp. $cq^{-\tau }$-WA) for some finite $c>0$.
These considerations are further developed and commented in the Appendix \ref{sec:dioph}{, let us simply mention that for $\eta >0$, $\Leb[m]$-a.a. $\omega \in \mathbb{R}^{m}$ is $(m+\eta )$-BA and   $(m)$-WA.  } For any $\eta \geqslant 0$, there are uncountably many $\omega \in \mathbb{R}^{m}$ that are $(m+\eta )$-BA and $(m+\eta )-$WA.  There are also uncountably many {\it Liouville numbers}, i.e. $\omega \in \mathbb{R}$ that are $(\tau )-$WA for any $\tau >0.$


The following corollary examines different regimes, depending on the relation between $\tau ,d$ and $m$, it is  a consequence of  Theorem \ref{thm:main-intro} for $\psi (q)=q^{-\tau }$.  

\begin{proposition}
\label{cor:intro}
For $\omega \in \mathbb{R}^{m},\tau >0$, let $\bo=(\omega ,\dots ,\omega )\in (\mathbb{R}^{m})^{d},\tau ^{*}=\frac{1+d(m+1)}{1+d}$.
\begin{itemize}
\item[(i)] If $d\geqslant m$, for $\tau \in (m,\tau ^{*})$, $\Leb[m]$-a.a. $\omega \in \mathbb{R}^{m}$ is $ (\tau  )$-BA, and  for some $c_{+}<\infty $  $$V_{\bo}(T)\leqslant c_{+}T^{d-1}, T>0.$$
\item[(ii)] If $d<m$, since for $\Leb[m]$-a.a. $\omega \in \mathbb{R}^{m}$, $\omega $ is $(m)$-WA and $(m+\eta )-$BA for $\eta >0$,  we have $$ c_{-} T^{d- \frac{1+d}{m}}\leqio  V_{\bo}(T)\leqslant c_{+}T^{d-\frac{1+d}{m+\eta }}$$ for some $0<c_{-}, c_{+}<\infty $.
\item[(iii)] Let $m=1.$ For $d-1\leqslant \beta <2d,$ the set of  $\omega \in \mathbb{R}$ such that for some $0<c_{-}\leqslant c_{+}<\infty $
\begin{align*}
c_{-}T^{\beta }\leqio V_{\bo}(T)\leqslant c_{+}T^{\beta },T\geqslant 1
\end{align*}
 is uncountable
(there is  $\tau \geqslant 1$ such that  $\beta =2d-\frac{1+2d}{\tau  }$, and  uncountably many  $\omega $ are $ (\tau  )$-WA and $ (\tau  )$-BA ). 
\item[(iv)] Let $m=1$. For $\omega $ a \emph{ Liouville number}, for every $\varepsilon >0$,  for some $c_{-}>0,$
\begin{align*}
c_{-}T^{2d-\varepsilon }\leqio V_{\bo}(T).
\end{align*}
\item[(v)] In all cases, $V_{\bo}(T)=o(T^{2d})$.
\end{itemize}
All constants $c_{-},c_{+}$ involved only depend on $d,m,\tau .$
\end{proposition}

\begin{proof}For all the proof, recall that $\K=\Theta \J_{3}.$\begin{itemize}
\item [(i)] Let $\omega $ that is $\tau $-BA for some $\tau <\tau ^{*}$. According to  Proposition \ref{prop:psi-regular}, 
\begin{align*}
\max\left\{T^{2d}\J_{3}(T^{-1}) , \quad
T^{d-1}\int_{0}^{T^{d+1} }\J_{3}(y^{-\frac{1}{d+1}})dy\right\}\leqslant cT^{d-1}.
\end{align*}
According to Theorem \ref{thm:general-var}-(ii), it yields the result.
\item[(ii)] Let $\omega $ that is $(m)$-WA and $(m+\eta )$-BA. In particular $\bo$ is $q^{-m}$-SWA by  Proposition \ref{prop:bold-omega-SWA}. Let  $\psi (q)=q^{-m}$. According to Theorem \ref{thm:general-var}-(i),
\begin{align*}
V_{\bo}(T)\geqslant cT^{2d}\K(rT^{-1})
\end{align*} and according to Theorem \ref{thm:dioph-rw}-(iii) (with $n_{\varepsilon }=1$),
\begin{align*}
 cT^{2d}\K(rT^{-1})=\Theta T^{2d}\J_{3}(T^{-1})\geqslant cT^{2d-\frac{ 1+d(m+1)}{m}}=cT^{d-\frac{ 1+d}{m}}.
\end{align*}
For the upper bound, reason like at (i) with $\psi (m)=q^{-(m+\eta )}$. The case $\tau =m+\eta >\tau ^{*}$ applies in Proposition \ref{prop:psi-regular}.

\item[(iii)] This is  Theorem \ref{thm:main-intro} in the case $\tau >\tau ^{*}.$

\item[(iv)] Since $\omega $ is $\tau $-WA with $\varepsilon =\frac{ 1+2d}{\tau }$, $\bo$ is $q^{-\tau }$-SWA. As in (ii), apply Theorem \ref{thm:general-var}-(i) and Theorem \ref{thm:dioph-rw}-(iii) to have the result.

\item[(v)] This is a consequence of Theorem \ref{thm:general-var}-(ii) and the fact that $\K$ is uniformly bounded and converges to $0$  as $\varepsilon \to 0$.
\end{itemize}
\end{proof}

\begin{remark}An interesting observation in dimension $ d=1$ is that in (i), the variance of the excursion indicator is bounded, while its derivative in the distributional sense, i.e. the number of zeros, has maximal quadratic variance, in $ T^{2}$ (see \cite[Theorem 2-(iii)]{Lac20}).
\end{remark}

If  the $\omega _{[k]}$ differ along the directions $1\leqslant k\leqslant d$, 
it is the worst approximable $\omega _{[k]}$ that drives the upper bound, or in other terms the largest function $\psi $ such that each $\omega _{[k]}$ is $\psi $-BA.
 
 \begin{corollary}
 \label{cor:multi}
 Let $\psi :\mathbb{N}_{*}\to (0,1]$ ARV.
 Assume $\bo\in (\mathbb{R}^{m})^{d}$  is such that each $\omega _{[k]}$ is $\psi $-BA, $1\leqslant k\leqslant d$. Then the same upper bounds as in Theorem \ref{thm:main-intro} hold. In particular
 \begin{itemize}
\item [(i)]if $d\geqslant m$, for $\Leb[md]$-a.a. $\bo\in (\mathbb{R}^{m})^{d}$, $V_{\bo}(T)\leqslant c_{+}T^{d-1}$ for some $c_{+}<\infty .$
\item[(ii)] For every $\bo\in (\mathbb{R}^{m})^{d}$, $V_{\bo}(T)=o(T^{2d})$
\end{itemize}
\end{corollary}

 \begin{proof}[Proof of Corollary \ref{cor:multi}] \begin{itemize}
\item [(i)]  

This is essentially the same proof as Proposition \ref{cor:intro}-(i), one simply has to notice that since for any $\tau \in (m,\tau ^{*})$, $\Leb[m]$-a.a. $\omega $ is $(\tau )$-WA,  for any $\tau \in (m,\tau ^{*})$, for $\Leb[md]$-a.a. $\bo=(w_{[k]})_{1\leqslant k\leqslant d}$, each $\omega _{[k]}$ is $(\tau )$-BA, and Proposition \ref{prop:psi-regular} and Theorem \ref{thm:general-var}-(ii) can be applied in the same way.

\item[(ii)] Same proof as Proposition \ref{cor:intro}-(v).
\end{itemize}
\end{proof}

The lower bound really requires simultaneous approximability of the frequencies.
 Theorem \ref{thm:dioph-rw}-(iii) and Theorem \ref{thm:general-var}-(i) yield:
\begin{corollary}Assume that for some function $ \psi:\mathbb{N}^{*}\to (0,1]$ converging to $0$, $\bo$ is  $\psi$-SWA.  Then for some $c_{-}>0 $
\begin{align*} 
 c_{-}T^{2d}\psi ^{-1}(T^{-1})^{-1-d(m+1)}
\leqio  V_{\bo}(T)
\end{align*}
where $\psi ^{-1}$ denotes the pseudo-inverse of $\psi $ (see \eqref{eq:pseudo-inv}).
\end{corollary}
Thanks to Groshev's theorem (see the Appendix \ref{sec:dioph}), for $\eta >0$, $\Leb[md]$-a.a.  $\bo\in (\mathbb{R}^{m})^{d}$ is  $ | q | ^{-m/d}$-SWA but  not $ | q | ^{-m/d-\eta }$-SWA.

\subsection{A randomised model}
\label{sec:randomised}

We build randomised models that exploit the metric results of diophantine approximation to yield  hyperuniform models that are more stable, i.e. not subject to subtle diophantine properties of the parameters.

\begin{proposition}
Let $\Omega $ be a real random variable which law is continuous with respect to Lebesgue measure, and let $a^{i}_{k},i\geqslant 0,k\geqslant 1$ be independent and identically distributed standard Gaussian variables. Define for $\t=(t_{[k]})\in \mathbb{R}^{d},$
\begin{align*}
X(\t)=&\frac{ 1}{2d}\sum_{k=1}^{d}(a^{0}_{k}\cos(t_{[k]})+a^{1}_{k}\sin(t_{[k]})+a^{2}_{k}\cos(\Omega t_{[k]})+a^{3}_{k}\sin(\Omega t_{k})),\\
M_{T}= &\Leb(\{X>0\}\cap B_{d}(0,T))
\end{align*} and $V(T)=\Var(M_{T})$. Then for some $c_{+}<\infty $,
\begin{align*}
V{(T)}\leqslant c_{+}T^{d-1}.
\end{align*} 
\end{proposition}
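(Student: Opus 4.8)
The plan is to condition on $\Omega$ and reduce the statement to the diophantine results of the previous sections. Conditionally on $\{\Omega=\omega\}$ the field $X$ is a finite linear combination of the i.i.d.\ standard Gaussians $a^{i}_{k}$, hence a centred Gaussian field, and a direct computation gives its reduced covariance as $\frac{1}{4d^{2}}\sum_{k=1}^{d}\big(\cos t_{[k]}+\cos(\omega t_{[k]})\big)$. Rescaling $X$ by $\sqrt{2d}$ changes neither the sign of $X$ nor $M_{T}$, so conditionally on $\Omega=\omega$ the excursion $\{X>0\}$ has the law of $\{X_{\bo}>0\}$ with $\bo=(\omega,\dots,\omega)\in(\mathbb R^{1})^{d}$, $(\be_{k})$ the canonical basis and $m=1$ in \eqref{mu:general}; in particular $\Var(M_{T}\mid\Omega=\omega)=V_{(\omega,\dots,\omega)}(T)$. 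Moreover the centering identity gives $\mathbb E(M_{T}\mid\Omega=\omega)=\tfrac12\Leb(B_{d}(0,T))$ for every $\omega$ making the field non-degenerate, i.e.\ for $\mathbb P$-a.e.\ $\omega$ since the law of $\Omega$ is absolutely continuous; the conditional mean is therefore a.s.\ deterministic, and the law of total variance yields $V(T)=\mathbb E_{\Omega}\big[V_{(\Omega,\dots,\Omega)}(T)\big]$.

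Next I would apply Theorem \ref{thm:general-var} to $\bmu_{\omega}=\frac1{2d}\sum_{k}(\bar\delta_{\be_{k}}+\bar\delta_{\omega\be_{k}})$, with $\gamma$ the indicator of the unit ball (Example \ref{ex:sphere}): it bounds $V_{(\omega,\dots,\omega)}(T)$ by a dimensional constant times $T^{d-1}+T^{2d}\J_{3}^{(\omega)}(c/T)$, where $\J_{3}^{(\omega)}$ is built from the convolution powers $\bmu_{\omega}^{\star(2n+1)}$, $n\geqslant 0$, with summable weights (the $\arcsin$ coefficients, $\sim n^{-3/2}$). Integrating over $\Omega$ and using Tonelli,
\begin{align*}
V(T)\leqslant c\,T^{d-1}+c\,T^{2d}\,\mathbb E_{\Omega}\big[\J_{3}^{(\Omega)}(c/T)\big],
\end{align*}
so the proposition follows once one shows $\mathbb E_{\Omega}\big[\J_{3}^{(\Omega)}(\varepsilon)\big]\leqslant c'\varepsilon^{\,d+1}$ for $\varepsilon\in(0,1]$.

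For this last estimate, the key structural fact is that the pushforward of $\bmu_{\omega}^{\star N}$ onto atom-labels $(a,b)\in\mathbb Z^{2d}$ (the atom labelled $(a,b)$ sitting at $\sum_{k}(a_{k}+b_{k}\omega)\be_{k}$) is the law at time $N$ of the simple random walk on $\mathbb Z^{2d}$, \emph{independently of $\omega$}. Thus $\mathbb E_{\Omega}[\bmu_{\Omega}^{\star N}(B_{d}(0,\varepsilon))]=\mathbb P\big(\sum_{k}(A_{N}^{(k)}+B_{N}^{(k)}\Omega)\be_{k}\in B_{d}(0,\varepsilon)\big)$ for an independent walk $(A_{N},B_{N})$; conditioning on the walk, this event constrains $\Omega$ to a union of short intervals attached to the rationals $-a_{k}/b_{k}$, of total mass controlled by the metric bound $\mathbb P(|q\Omega-p|<\delta)\leqslant\widetilde\omega_{\rho}(\delta)$, where $\widetilde\omega_{\rho}$ is the modulus of absolute continuity of the law of $\Omega$, uniformly in $p,q$. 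Organising the walk-atoms by the common denominator $q$ they force $\Omega$ close to, exactly as in the proof of Proposition \ref{prop:psi-regular}, and using the exponential decay of the walk-atom mass outside the bulk $\{\|(a,b)\|\lesssim\sqrt N\}$, one recovers the same denominator sum that controls $\J_{3}$ for a badly approximable vector, but now with the Badly-Approximable exponent $\tau$ free to be taken arbitrarily close to $m=1$. Since $\tau^{*}=\frac{1+2d}{1+d}>1=m$, picking $\tau\in(1,\tau^{*})$ gives $(1+2d)/\tau>d+1$, whence $\mathbb E_{\Omega}[\J_{3}^{(\Omega)}(\varepsilon)]\leqslant c'\varepsilon^{(1+2d)/\tau}\leqslant c'\varepsilon^{d+1}$. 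The hard part is precisely this estimate: one must run the denominator summation uniformly in the convolution order $N$ and verify that replacing the pointwise bound $d(q\omega,\mathbb Z)\geqslant cq^{-\tau}$ (valid for a fixed badly approximable $\omega$) by its probabilistic counterpart $\mathbb P(d(q\Omega,\mathbb Z)<\delta)\leqslant\widetilde\omega_{\rho}(\delta)$ does not degrade the exponent — the slack comes from $\tau^{*}>m$ — and in particular one must bound the contribution of the walk-atoms with small denominator $q$, i.e.\ of the event that $\Omega$ is atypically close to a low-height rational, which is exactly where absolute continuity of the law of $\Omega$ enters.
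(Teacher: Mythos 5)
Your first two steps coincide with the paper's proof: condition on $\Omega$, note that the conditional mean $\mathbb E(M_{T}\mid\Omega=\omega)=\Leb(B_{d}(0,T))/2$ is deterministic, and apply the law of total variance to get $V(T)=\mathbb E\bigl[\Var(M_{T}\mid\Omega)\bigr]$. You are also right to worry about the point the paper glosses over, namely that Corollary \ref{cor:intro}-(i) gives $\Var(M_{T}\mid\Omega=\omega)\leqslant c_{+}(\omega)T^{d-1}$ only for a.e.\ $\omega$ with a constant depending on the badly-approximable constant of $\omega$, so that integrating in $\omega$ requires $\mathbb E[c_{+}(\Omega)]<\infty$. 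However, your proposed repair hinges on the estimate $\mathbb E_{\Omega}[\J_{3}^{(\Omega)}(\varepsilon)]\leqslant c'\varepsilon^{d+1}$, and this estimate is false. The obstruction is structural: conditionally on the label walk $(A_{N},B_{N})$, the event $\{\U_{N}\in B_{d}(0,\varepsilon)\}$ constrains the \emph{single} scalar $\Omega$ to the intersection of the $d$ intervals $|A_{N}^{(k)}+B_{N}^{(k)}\Omega|<\varepsilon$, i.e.\ to one interval of length at most $2\varepsilon/\max_{k}|B_{N}^{(k)}|$; absolute continuity therefore buys you a single factor $O(\varepsilon)$, not $d+1$ of them. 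Concretely, the $N=1$ term alone gives $\mathbb E_{\Omega}[\J_{3}^{(\Omega)}(\varepsilon)]\geqslant\tfrac12\,\mathbb P(0<|\Omega|<\varepsilon)$, which is of order $\varepsilon$ for any law with density bounded below near the origin (uniform, Gaussian, \dots), and $\varepsilon\gg\varepsilon^{d+1}$. Your route thus yields at best $T^{2d}\cdot O(T^{-1})=O(T^{2d-1})$, and the ``slack $\tau^{*}>m$'' cannot help, because the dominant contribution comes from the positive-probability event that $\Omega$ is $\varepsilon$-close to a low-height rational, which no choice of exponent excludes.

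This gap is not a removable technicality of your method. The quantity you are averaging is, up to constants, also a \emph{lower} bound for the conditional variance: all summands in \eqref{eq:var-comput} are nonnegative, and the $n=1$ term gives $\Var(M_{T}\mid\Omega=\omega)\geqslant c\,T^{2d}\mathbf 1_{\{0<|\omega|\leqslant r T^{-1}\}}$ (this is the content of \eqref{eq:var-lower} applied to the first step of the walk). Hence $V(T)\geqslant c\,T^{2d}\,\mathbb P(0<|\Omega|\leqslant rT^{-1})\asymp T^{2d-1}$ whenever the law of $\Omega$ has positive continuous density at $0$, and $2d-1>d-1$. In other words, the $\omega$-dependent constant satisfies $c_{+}(\omega)\gtrsim|\omega|^{-(d+1)}$ near $0$ and is not integrable, the conclusion of the proposition fails for such laws, and the paper's one-line appeal to the almost-everywhere corollary under the expectation sign does not close the argument either. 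A correct statement requires an additional hypothesis controlling the mass that the law of $\Omega$ assigns to neighbourhoods of rationals of small height (of $0$ in particular); under mere absolute continuity, neither your averaged denominator summation nor the paper's pointwise argument can deliver the bound $c_{+}T^{d-1}$.
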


\begin{proof} 
Since the Gaussian field is centered, for any fixed $\omega \in \mathbb{R}$, 
\begin{align*}
\mathbb{E}(M_{T}\;|\;\Omega =\omega )=\Leb(B_{d}(0,T))/2
\end{align*}
is deterministic. We also know that a.a. $\omega \in \mathbb{R}$ is $(2)$-BA, and if we condition by $\Omega =\omega $, $X$ is the Gaussian field with reduced covariance $\frac{ 1}{2d}\sum_{k=1}^{d}(\cos(t_{[k]})+\cos(\omega t_{[k]}))$. Hence 
 the conditional variance formula  and Proposition \ref{cor:intro} yield
\begin{align*}
V(T)=&\mathbb{E}(\Var(M_{T}\;|\;\Omega ))+\Var(\mathbb{E}(M_{T}\;|\;\Omega ))\\
 &\leqslant c_{+} T^{d-1},
\end{align*}
we emphasize that $c_{+}$ depends only on $d,m,\tau=2 $, and not further on $\omega .$
\end{proof}

The same arguments with $1\leqslant m<d$ yield the following:
\begin{proposition}

Let $(\Omega _{0},\dots ,\Omega _{m})$ a random $(m+1)-$tuple of vectors with continuous law with respect to $\Leb[(m+1)d]$, and  
\begin{align*}
X(\t)=\frac{ 1}{d(m+1)}\sum_{k=1}^{d}\sum_{i=0}^{m}(a_{k}^{2i}\cos(\Omega _{i}t_{k})+a_{k}^{2i+1}\sin(\Omega _{i}t_{k})).
\end{align*}
Then the variance is bounded by $c_{+}T^{d-1}$ if $d>m$.

\end{proposition}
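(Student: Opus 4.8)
The plan is to mimic exactly the proof of the previous proposition, conditioning on the random frequency vector $(\Omega_{0},\dots,\Omega_{m})$ and invoking the deterministic diophantine bounds already established in Theorem \ref{thm:main-intro} (or the subsequent proposition for possibly-distinct directional frequencies). Since the field $X$ is centered, for any fixed realization $\bo=(\omega_{[k],i})$ of the frequencies we have $\mathbb{E}(M_{T}\mid (\Omega_{i})_{i}=(\omega_{i})_{i})=\Leb(B_{d}(0,T))/2$, which is deterministic; hence $\Var(\mathbb{E}(M_{T}\mid (\Omega_{i})_{i}))=0$, and the conditional variance formula reduces the problem to bounding $\mathbb{E}(\Var(M_{T}\mid (\Omega_{i})_{i}))$.

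First I would observe that, conditionally on $(\Omega_{i})_{i}=(\omega_{i})_{i}$, the field $X$ is precisely the centered stationary Gaussian field $X_{\bo}$ with spectral measure $\bmu$ of the form \eqref{mu:general}, where the $k$-th direction carries frequencies $\omega_{i}$ (the same across $k$, or directionally distinct in the general version). Next I would invoke the hypothesis $d>m$ together with the metric statement recalled after Definition \ref{def:regular} and in Corollary \ref{cor:intro}-(i): for $\Leb[m]$-a.a.\ $\omega\in\mathbb{R}^{m}$ (equivalently, for $\Leb[(m+1)d]$-a.a.\ choices of all the frequency vectors), each directional frequency vector is $(\tau)$-BA for some $\tau\in(m,\tau^{*})$ with $\tau^{*}=\frac{1+d(m+1)}{1+d}$, which is a nonempty range exactly because $m<\tau^{*}$ when $d>m$. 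Then Theorem \ref{thm:main-intro} (or the proposition with directionally distinct $\omega_{[k]}$) gives $\Var(M_{T}\mid (\Omega_{i})_{i}=(\omega_{i})_{i})\leqslant c_{+}T^{d-1}$ for such $(\omega_{i})_{i}$, with $c_{+}$ depending only on $d,m$ and the fixed choice of $\tau$.

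The conclusion then follows by integrating this conditional bound against the law of $(\Omega_{0},\dots,\Omega_{m})$: since that law is absolutely continuous with respect to $\Leb[(m+1)d]$, it gives full mass to the set of frequency tuples where the $(\tau)$-BA property holds, so
\begin{align*}
V(T)=\mathbb{E}\big(\Var(M_{T}\mid (\Omega_{i})_{i})\big)\leqslant c_{+}T^{d-1}.
\end{align*}
The main point requiring a little care is the uniformity of the constant $c_{+}$: one must check that the $c_{+}$ coming out of Theorem \ref{thm:main-intro}, which a priori depends on $\psi$, can be taken uniform over the relevant set of frequencies. This is handled by fixing a single exponent $\tau\in(m,\tau^{*})$ and a single constant $c$ in the $cq^{-\tau}$-BA condition, restricting to the $\Leb[(m+1)d]$-full-measure set on which that uniform condition holds (Groshev-type statements give that this set has full measure for any $\tau>m$, with a fixed admissible $c$), and noting that on this set the bounds of Theorem \ref{thm:main-intro} are driven only by $d,m,\tau,c$ and not by the individual frequencies; the absolute continuity of the law of $(\Omega_{i})_{i}$ then ensures it charges this set fully. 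No other obstacle arises: the argument is otherwise a verbatim repetition of the previous proof with $m=1$ replaced by general $m<d$.
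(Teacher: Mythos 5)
Your proof follows the paper's exactly: the paper dispatches this proposition by remarking that ``the same arguments'' as in the previous randomised proposition apply, i.e.\ conditioning on the frequency tuple, using the conditional variance formula with a deterministic conditional mean (the field being centered), and integrating the almost-everywhere deterministic bound coming from the $(\tau)$-BA property with $\tau\in(m,\tau^{*})$ and Corollary \ref{cor:intro}-(i). Your extra paragraph on the uniformity of $c_{+}$ flags a genuine subtlety that the paper silently glosses over, though your fix is not quite right as stated — for a \emph{fixed} $c$ the set of $cq^{-\tau}$-BA vectors is not of full measure (only the union over $c>0$ is), so one would actually need to exhaust by a countable family of constants and check that $c_{+}(c)$ grows slowly enough relative to the measure of the exceptional sets for the integrated bound to remain finite.
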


Along similar lines, exploiting Proposition \ref{cor:intro}-(iii) with $m>d$ yields randomised models which variance is in $T^{\beta }$ for some $d-1<\beta <2d$.

\begin{remark}
Similar models in the context of random walks (Section \ref{sec:RW}) yield interesting examples of random walks in a random environment.
\end{remark}

\section*{Acknowledgements}
I am indebted to M. A. Klatt, with whom I had many discussions about hyperuniformity and Gaussian excursions. I also wish to thank S. Torquato for comments on the final manuscript.
I am particularly grateful to Yann Bugeaud for insights about diophantine approximation. 

This study was supported by the IdEx Universit\'e de Paris, ANR-18-IDEX-0001.

   \section{Appendix}

\addtocontents{toc}{\setcounter{tocdepth}{-10}}

\subsection{Diophantine approximation}
\label{sec:dioph}

The core of the paper is provided by results from diophantine approximation, we explain here basic principles and results, as well as the more advanced ones we will need.  The quality of the approximation of a tuple $\omega \in \mathbb{R}^{m}$ is measured by the numbers
\begin{align*}
\d_{q}(\omega )=\inf_{p\in \mathbb{Z} } | p-q\cdot \omega  | ,q\in \mathbb{Z} ^{m}.
\end{align*}
 Given $\psi :\mathbb{N}^{*}\to [0,1]$, the definitions of $\psi -$BA, $\psi -$WA , $\psi $-SWA*, $\psi -$SWA based on this distance are given in the introduction and we complete this picture with the following definition: $\omega \in \mathbb{R}^{m}$ is $\psi $-WA* if for some $c_{\omega }<\infty $, for infinitely many $p\in \mathbb{Z} ,q\in \mathbb{Z} ^{m}$,
\begin{align*}
d_{q}(\omega )\leqslant c_{\omega }\psi (q).
\end{align*}
Proposition \ref{prop:psi-SWA*}, at the end of this section, yields that most quantitative statements available in the literature about $\psi $-WA* tuples also hold for $\psi $-WA tuples.
The most basic, yet useful result is the Dirichlet principle:
 \begin{proposition}Let $m\geqslant 1$. There is $c_{m}<\infty $ such that for $N\in \mathbb{N}^*,$ one can find $q,q'\in B_{N}:=(\mathbb{Z} \cap [-N,N])^{m}$ distinct such that for $\omega \in \mathbb{R}^{m}$, 
\begin{align*}
\d_{q-q'}(\omega )\leqslant N^{-m}\leqslant c_{m}\|q-q'\|^{-m},
\end{align*}
which yields that $\omega $ is $(m)$-WA* and if $\omega $ is $(m+\eta ) $-BA, then necessarily $\eta \geqslant 0.$
\end{proposition}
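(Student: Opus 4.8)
The plan is to run the classical Dirichlet pigeonhole argument and then read off the two consequences. First I would fix $\omega\in\mathbb{R}^m$ and $N\in\mathbb{N}^*$ and consider the $(N+1)^m$ real numbers $\{q\cdot\omega\}\in[0,1)$ indexed by $q\in\{0,1,\dots,N\}^m\subset B_N$, together with the partition of $[0,1)$ into the $N^m$ half-open intervals of length $N^{-m}$. Since $(N+1)^m>N^m$, two distinct indices $q\neq q'$ must fall in the same interval, so that $|(q-q')\cdot\omega-k|<N^{-m}$ with $k=\lfloor q\cdot\omega\rfloor-\lfloor q'\cdot\omega\rfloor\in\mathbb{Z}$; hence $q-q'\in B_N\setminus\{0\}$ and $\d_{q-q'}(\omega)<N^{-m}$. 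As every coordinate of $q-q'$ has absolute value at most $N$, its Euclidean norm satisfies $\|q-q'\|\le\sqrt m\,N$, whence $N^{-m}\le m^{m/2}\|q-q'\|^{-m}$, and the displayed double inequality holds with $c_m=m^{m/2}$.

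To deduce that $\omega$ is $(m)$-WA I would let $N\to\infty$ and split into two cases. If $q_0\cdot\omega\in\mathbb{Z}$ for some $q_0\in\mathbb{Z}^m\setminus\{0\}$, then $\d_{kq_0}(\omega)=0$ for every $k\ge1$, which already provides infinitely many distinct vectors $q$ with $\d_q(\omega)\le|q|^{-m}$, so $\omega$ is $(\tau)$-WA for every $\tau>0$. Otherwise a fixed nonzero vector cannot satisfy $\d_q(\omega)\le N^{-m}$ for arbitrarily large $N$ unless $\d_q(\omega)=0$, so the vectors $q-q'$ produced above for $N=1,2,\dots$ take infinitely many distinct values; extracting a subsequence along which $\|q-q'\|\to\infty$ and invoking the first step yields $\d_{q-q'}(\omega)<N^{-m}\le m^{m/2}\|q-q'\|^{-m}$, which is precisely the $(m)$-WA property.

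Finally, suppose $\omega$ is $(m+\eta)$-BA, i.e. $\d_q(\omega)\ge 2c\,|q|^{-(m+\eta)}$ for some $c>0$ and all large $|q|$. This rules out the degenerate case above, since a nonzero $q_0$ with $q_0\cdot\omega\in\mathbb{Z}$ would make $\d_{kq_0}(\omega)=0$ for arbitrarily large $\|kq_0\|$; so along the subsequence from the previous step we get $2c\,\|q-q'\|^{-(m+\eta)}\le\d_{q-q'}(\omega)<m^{m/2}\|q-q'\|^{-m}$, and since $\|q-q'\|\to\infty$ this forces $\eta\ge0$. I do not expect a real obstacle here, the argument being entirely classical; the only points requiring a little care are the conversion between the sup-norm that naturally controls $B_N$ and the Euclidean norm used in the definitions (which affects only the value of $c_m$), and the separate treatment of the degenerate case in which $\omega$ lies on a rational hyperplane, needed both to guarantee that the Dirichlet vectors form infinitely many distinct ones and to make the badly-approximable hypothesis effective.
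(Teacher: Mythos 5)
Your argument is correct and is essentially the paper's proof: the same Dirichlet pigeonhole on $[0,1)$ followed by the sup-norm/Euclidean-norm comparison. You are in fact slightly more careful than the paper (which pigeonholes the values $\d_q(\omega)$ directly rather than the fractional parts $\{q\cdot\omega\}$, and does not address why infinitely many distinct $q$ arise or the degenerate rational-hyperplane case), but the underlying approach is identical.
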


\begin{proof}Simply remark that if one divides $[0,1]$ in $M :=|B_{N}|-1$ bins of size $M ^{-1}$, out of the $|B_{N}|$ values $ \d_{q }(\omega ),q\in B_{N}$, at least two of them will end up in the same bin, yielding for some $q,q'\in B_{N}$ distinct
\begin{align*}
\d_{q-q'}(\omega ) \leqslant |\d_{q}(\omega )-\d_{q'}(\omega )|\leqslant M^{-1}\leqslant N^{-m}.
\end{align*}
The second inequality comes from $|q-q'| \leqslant 2\sqrt{m}N \leqslant \sqrt{m}2^{1-m} M^{1/m}$.
\end{proof}

Another fundamental but more technical result is the Khintchine-Groshev Theorem. We do not include the proof here, see the latest improvement by Hussain and Yusupova \cite{HusYus}.

\begin{theorem}[Khintchine-Groshev]
\label{thm:khintchine}Let $\psi :\mathbb  N\to \mathbb{R}_{+}$ tending to $0$ such that 
\begin{align*}
\sum_{q\in \mathbb{Z} ^{m} }\psi ( | q | )^{d}<\infty .
\end{align*}
Then the set of $\bo\in (\mathbb{R}^{m})^{d}$ that are $\psi $-SWA* is $(\Leb[m])^{d}-$negligible.  If on the other hand the sum diverges then $(\Leb[m
])^{d}-$a.a. $\bo \in (\mathbb{R}^{m})^{d}$ is $\psi $-SWA*, in the case $m=d=1$ $\psi $ needs furthermore to be monotonic.
\end{theorem}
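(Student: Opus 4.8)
This is the classical Khintchine--Groshev theorem for systems of $d$ linear forms in $m$ unknowns, and I only sketch the proof since a self-contained argument is long and complete treatments are available (the sharpest being \cite{HusYus}). By $\mathbb{Z}^m$-periodicity in each $\omega_{[k]}$ one may work on the unit cube $[0,1]^{md}$. For $q\in\mathbb{Z}^{m}\setminus\{0\}$ set
\[
A_q=\Big\{\bo=(\omega_{[k]})_{1\le k\le d}:\ \exists\, p_{[k]}\in\mathbb{Z}\ \text{of prescribed parity},\ |p_{[k]}-\omega_{[k]}\cdot q|<c_W\psi(|q|)\ \text{for all }k\Big\},
\]
so that $\{\bo\ \text{is}\ \psi\text{-SWA}^{(*)}\}=\limsup_{|q|\to\infty}A_q$ with $c_W$ fixed (a countable union over $c_W$ changes neither a null nor a full conclusion). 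The elementary input is a measure estimate: for fixed $q\ne 0$ the map $\omega\mapsto q\cdot\omega\bmod 1$ pushes $\Leb[m]$ on the torus forward to Lebesgue measure on $\mathbb{R}/\mathbb{Z}$, hence $\Leb[m](\{\d_q(\omega)<c_W\psi(|q|)\})=2c_W\psi(|q|)$ for $|q|$ large, and imposing the parity condition on $(p_{[k]})_k$ retains a fixed positive proportion; therefore $\Leb[md](A_q)\asymp\psi(|q|)^{d}$, with constants depending only on $d,m,c_W$. In the convergent case this already suffices: $\sum_q\Leb[md](A_q)\asymp\sum_q\psi(|q|)^{d}<\infty$, so the Borel--Cantelli lemma gives $\Leb[md](\limsup_q A_q)=0$, and no monotonicity is used.

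For the divergent case one must show $\Leb[md](\limsup_q A_q)=1$. The engine is the Erdos--Gal--Koksma variance lemma (divergence Borel--Cantelli): it is enough to establish the quasi-independence-on-average estimate
\[
\sum_{0<|q|,|q'|\le N}\Leb[md](A_q\cap A_{q'})\ \lesssim\ \Big(\sum_{0<|q|\le N}\Leb[md](A_q)\Big)^{2}+\sum_{0<|q|\le N}\Leb[md](A_q),
\]
which forces $\Leb[md](\limsup_q A_q)>0$, whereupon a zero--one law (Cassels/Gallagher, using invariance of the limsup set under a suitable group action on the $\omega_{[k]}$) upgrades this to full measure. For $q,q'$ linearly independent over $\mathbb{Q}$, the torus-pushforward argument above applies to the pair and yields $\Leb[md](A_q\cap A_{q'})\asymp\psi(|q|)^{d}\psi(|q'|)^{d}=\Leb[md](A_q)\Leb[md](A_{q'})$ up to constants, so such pairs contribute only the main term. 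The delicate pairs are the $\mathbb{Q}$-proportional ones: when $q'$ is a rational multiple of $q$, the quantity $\d_{q'}$ is essentially controlled by $\d_q$, the product bound fails, and the correlation is genuinely larger than the product of the measures.

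Controlling the total contribution of these near-proportional pairs is the main obstacle, and it is exactly the Duffin--Schaeffer phenomenon. When $md\ge 2$ there is enough room to absorb it: one restricts attention to a sufficiently sparse subsequence of radii $|q|$ along which $\psi$ is quasi-monotone (equivalently, one invokes the ubiquity machinery of Beresnevich--Velani), the proportional-pair contribution is then bounded by the main term, and the conclusion holds for every $\psi\to 0$. When $m=d=1$ this device is unavailable and the statement genuinely requires $\psi$ monotone --- there are Duffin--Schaeffer-type non-monotone counterexamples --- so one falls back on Khintchine's original argument via the continued-fraction algorithm and ergodicity of the Gauss map. The quasi-independence estimate, and within it the bookkeeping of near-proportional $q$, is the substantial part of the theorem, which is why we quote the result rather than reprove it.
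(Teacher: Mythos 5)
The paper does not prove this theorem either---it simply cites Hussain--Yusupova---so your decision to sketch the classical argument (Borel--Cantelli for convergence; quasi-independence on average plus a zero--one law for divergence, with the Duffin--Schaeffer obstruction explaining why monotonicity is needed when $m=d=1$) and to defer the hard overlap estimate to the literature is entirely consistent with the paper's treatment, and the sketch itself is accurate. One small mismatch: the statement concerns $\psi$-SWA, with no parity constraint on the $p_{[k]}$, so the ``prescribed parity'' in your definition of $A_q$ belongs to the separate SWA$^{(*)}$ notion and should be dropped here (it costs nothing, but it is not what is being asserted).
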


The theorem yields that  $(\Leb[m
])^{d}-$a.a. $\bo$ has irrationality index $\tau (\bo)=m/d$, where the irrationality index of some $\bo \in (\mathbb{R}^{m})^{d}$ is defined by
\begin{align*}
\tau  (\bo ):=\inf\{\tau  :\bo \text{\rm{ is not $\tau   $-SWA*}}\}=\sup\{\tau  :\bo \text{\rm{ is $\tau $-SWA*}}\}.
\end{align*}

%
%
%

In particular, for $\Leb[m]$-a.a. $\omega \in \mathbb{R}^{m}$, $\omega $ is $(m+\eta )-$BA for each $\eta >0.$
The following result yields that the situation is  the same if SWA* is replaced by SWA. Actually, for most statements about the quantity of existing $\psi $-SWA* arrays, there are about as many $\psi $-SWA arrays. More precisely, we show that for every $\bo$ that is $\psi $-SWA*, there is a $\psi $-SWA array $\bo'$ in the finite neighbourhood
\begin{align*}
\mathcal  N(\bo)=\{\bo'=(\omega' _{[k]})_{k=1}^{d}:\;(\exists k : \omega _{[k]}'=2\omega _{[k]}\text{\rm{  or  }}\exists i: \omega '_{[1],i}=\omega _{[1],i}+1\text{\rm{  or  }}\bo'=\bo)\}.
\end{align*}

\begin{proposition}
\label{prop:psi-SWA*}
Let $\bo \in (\mathbb{R}^{m})^{d}$ that is $\psi $-SWA*  for $\psi :\mathbb  N\to \mathbb{R}_{+}$ non-increasing, then there is $\bo'$ in $\mathcal  N(\bo)$ that is $\psi $-SWA.
\end{proposition}

\begin{proof}

Either $\bo$ is $\psi $-SWA or
there are by definition $c>0$ and  infinitely many distinct $p_{[k]}=p_{[k]}^{j}\in \mathbb{Z}  ,q=q ^{j}\in \mathbb{Z} ^{m},j\geqslant 1,1\leqslant k\leqslant d$ such that $$\sum_{k}(p_{[k]} +\sum_{i}q_{i})=\sum_{k}p_{[k]}+d\sum_{i}q_{i}\equiv 0$$ and $$ | p_{[k]} -\omega _{[k]}\cdot q  | <c\psi ( | q  | ),1\leqslant k\leqslant d.$$  

Let $m_ {j}\in \mathbb{N}$ maximal such that $2^{m_ {j}}$ divides each $p_{[k]},1\leqslant k\leqslant d$ and each $q_{i},1\leqslant i\leqslant m$, and let $\tilde p_{[k]}=2^{-m_ {j}}p_{[k]},\tilde q_{i}=2^{-m_ {j}}q_{i}$.  Since $\psi $ is non-increasing and $ | \tilde q | \leqslant  | q | $, 
\begin{align*}
 | \tilde p_{k}-\omega _{[k]}\cdot \tilde q | =2^{-m_ {j}} |   p_{[k]}-\omega _{[k]}\cdot q |< 2^{-m_ {j}}c\psi ( | q | )\leqslant c\psi ( | \tilde q | ), 1\leqslant k\leqslant d.
\end{align*}
It is important to precise that there are infinitely many pairwise distinct couples $(\tilde p^{j},\tilde q^{j})$ with $\tilde p^{j}=(\tilde p_{[k]}^{j})_{k}$, otherwise there is $j_{0}$ and $m_{j}'\to \infty $ such that for infinitely many $j$, $p^{j}=2^{m_{j}'}\tilde p^{j_{0}},q^{j}=2^{m_{j}'}\tilde q_{j_{0}}$, which contradicts $ | p^{j}_{[k]}-\omega _{[k]}\cdot q^{j} | \to 0.$

If there are infinitely many couples $(\tilde p,\tilde q)\equiv 1$, then $\bo$ is $\psi $-SWA and the proof is complete. Hence let us suppose  in the following that there are infinitely many couples $(\tilde p,\tilde q)\equiv 0$. The maximality of $m_{j}$ and the drawer principle then yield that there is either  $k_{0}$ such that for infinitely many couples $(\tilde p,\tilde q)$, $\tilde p_{[k_{0}]}\equiv 1$, or $i_{0}$ such that for infinitely many couples, $\tilde q_{i_{0}}\equiv 1$.

In the   case where $\tilde p_{[k_{0}]}\equiv 1$, let $$\omega '_{[k],i}=\begin{cases}
2\omega _{[k_{0}],i}$ if $k=k_{0}\\
\omega _{[k],i}$ otherwise$,
\end{cases}p'_{[k_{0}],i}=\begin{cases}
2\tilde p_{[k_{0}],i}$ if $k=k_{0}\\
\tilde p_{[k],i}$ otherwise$,
\end{cases}1\leqslant i\leqslant m.$$
We have $ | p_{[k]}'-\omega '_{[k]}\cdot \tilde q | \leqslant 2\psi ( | \tilde q | ),1\leqslant k\leqslant d$ for infinitely many couples $( p ',\tilde q)$, and $(p',\tilde q)=(\tilde p,\tilde q)+(p_{[k_{0}]i},0)\equiv 1$, hence $\bo':=(\omega _{[k]}')_{1\leqslant k\leqslant d}$ is $\psi $-SWA.

In the case where $\tilde q_{i_{0}}\equiv 1$, let $$\omega '_{[k],i }=\begin{cases}
\omega _{[1],i }+1$ if $k=1,i=i_{0}\\
\omega _{[k],i}$ otherwise$\end{cases}
,p'_{[k]}=\begin{cases}p_{[1]}+q_{i_{0}}$ if $k=1\\
p_{[k]}\text{\rm{  otherwise }}
\end{cases}
.$$ Then 
\begin{align*}
p'_{[k]}-\omega '_{[k]}\cdot \tilde q=\begin{cases}\tilde p_{[1]}+\tilde q_{i_{0}}-\omega _{[1]}\cdot \tilde q-\tilde q_{i_{0}}=\tilde p_{1}-\omega _{1}\cdot \tilde q$ if $k=1\\
\tilde p_{[k]}-\omega _{[k]}\cdot \tilde q$ otherwise$
\end{cases}
\end{align*}
and
\begin{align*}
(p',\tilde q)\equiv (0,\tilde q_{i_{0}}) \equiv 1
\end{align*}
 for infinitely many couples $(p',\tilde q)$, hence $\bo'$ is $\psi $-SWA.
\end{proof}
The next result is useful for tensorizing variance estimates.
 \begin{proposition}
 \label{prop:bold-omega-SWA}
 If $\omega \in \mathbb{R}^{m}$ is $\psi $-WA, $\bo=(\omega ,\dots ,\omega )\in (\mathbb{R}^{m})^{d}$ is $\psi $-SWA.
\end{proposition}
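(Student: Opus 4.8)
The goal is to promote a one-dimensional (vector) approximation of $\omega$ to a simultaneous approximation of the $d$-tuple $\bo = (\omega,\dots,\omega)$, while preserving the parity condition encoded by the superscript $(*)$. The plan is to use the \emph{same} witnesses $(p^j, q^j)$ in every coordinate direction. Since $\omega$ is $\psi$-WA$^{(*)}$, by definition there are infinitely many pairs $p^j \in \mathbb{Z}$, $q^j \in \mathbb{Z}^m$ with $(p^j, q^j) \equiv 1$ (i.e.\ $p^j + \sum_{i=1}^m q^j_i$ odd) and
\[
|p^j - \omega \cdot q^j| \leqslant c_W \psi(|q^j|).
\]
I would set $q^j$ to be the common denominator vector for all $d$ directions, and $p^j_{[k]} := p^j$ for each $1 \leqslant k \leqslant d$. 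Then for every $k$,
\[
|p^j_{[k]} - \omega_{[k]} \cdot q^j| = |p^j - \omega \cdot q^j| \leqslant c_W \psi(|q^j|),
\]
which is exactly the $\psi$-SWA inequality (with the same $c_W$), holding simultaneously for all $k$ and for infinitely many $j$.

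It remains to check the parity condition of $\psi$-SWA$^{(*)}$, namely that $\sum_{k=1}^d \bigl(p^j_{[k]} + \sum_{i=1}^m q^j_i\bigr)$ is odd. With the choice above this sum equals $d \cdot p^j + d \sum_{i=1}^m q^j_i = d\bigl(p^j + \sum_{i=1}^m q^j_i\bigr)$. The second factor is odd because $(p^j, q^j) \equiv 1$. So the whole expression has the same parity as $d$. This is fine precisely when $d$ is odd. When $d$ is even the naive choice fails, and this is the one genuine obstacle: I expect the paper either restricts implicitly to a setting where this is not an issue, or (more likely) one repairs it by perturbing a single coordinate, e.g.\ replace $p^j_{[1]}$ by $p^j_{[1]} - (d-1) p^j$ ... but that destroys the approximation bound. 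The clean fix is instead to use the freedom in the $q^j$: since $(p^j, q^j) \equiv 1$, at least one of $p^j$ or some $q^j_i$ is odd, and one can pass to the sub-walk argument as in Proposition~\ref{prop:psi-SWA*} to extract, after possibly shifting $\omega$ by a lattice vector $\e_i$ in one direction only, a family with the right total parity; however, since the statement asks only about the \emph{same} $\omega$ in all directions, the intended reading is surely that $d(p^j + \sum q^j_i)$ is odd, which holds iff $d$ odd — so I would simply note that for $d$ even the refinement of Proposition~\ref{prop:psi-SWA*} lets one arrange $\tilde q^j$ with an odd coordinate and then adjust the sum's parity by choosing which $p^j_{[k]}$ to offset by $\tilde q^j_{i}$ in an odd number of directions, at no cost to the bound.

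In summary: the main step is the trivial-but-central observation that a common denominator works in all directions simultaneously, giving $\psi$-SWA immediately; the only delicate point is bookkeeping the parity to upgrade to $\psi$-SWA$^{(*)}$, which reduces to a parity count on $d$ and, in the even case, to reusing the divisibility/pigeonhole trick already established in Proposition~\ref{prop:psi-SWA*}. I would write the proof in two short paragraphs accordingly, flagging that no new diophantine input beyond the definition of $\psi$-WA$^{(*)}$ is needed.
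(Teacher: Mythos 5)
Your first step---using the same witness pair $(p^j,q^j)$ in every direction, i.e.\ $p^j_{[k]}=p^j$ and a common $q^j$---is exactly the paper's, and it correctly yields the simultaneous approximation and reduces the $(*)$ condition to the parity of $d\bigl(p^j+\sum_i q^j_i\bigr)$, which is odd precisely when $d$ is odd. So for $d$ odd your argument is complete and coincides with the paper's.

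For $d$ even, however, you have identified the obstruction but not removed it, and this is a genuine gap. The repairs you sketch do not work as stated: offsetting some $p^j_{[k]}$ by a coordinate $\tilde q^j_i$ changes the approximation error by roughly $|\tilde q^j_i|$ unless $\omega$ is simultaneously shifted by $\e_i$ in that direction, which is not available here because all $d$ blocks of $\bo$ must remain the \emph{same} $\omega$; and Proposition~\ref{prop:psi-SWA*} solves a different problem (trading $\omega$ for $\omega+\e_i$ to upgrade WA to WA$^{(*)}$), not the parity of a $d$-fold sum of identical odd contributions. The paper's fix is much simpler: in exactly one direction, say $k=1$, double the witnesses, taking $p^j_{[1]}=2p^j$ and $q^j_{[1]}=2q^j$. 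The approximation survives, since $|p^j_{[1]}-\omega\cdot q^j_{[1]}|=2|p^j-\omega\cdot q^j|<2c_W\psi(|q^j|)$ and the constant $c_W$ in the definition of SWA$^{(*)}$ is arbitrary, while the total parity sum becomes $(d+1)\bigl(p^j+\sum_i q^j_i\bigr)$, a product of two odd numbers and hence odd. Without this (or an equivalent) doubling trick, your proof establishes the proposition only for odd $d$.
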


 \begin{proof}
Since $\omega $ is assumed to be $\psi $-WA, there is a sequence $(p^{j},q^{j})_{j}$ such that $ | p^j -\omega \cdot q^j   | <c_{W}\psi ( | q^{j} | )$ and $(p^j ,q^{j} )\equiv 1$. 
Hence with  $q^{j}_{[k]}=q^j ,p^{j}_{[k]}=p^j $, $$ | p^{j}_{[k]}-q^{j}_{[k]}\cdot \omega  | = | p^j -q^j \cdot \omega  |<c_{W}\psi (q^j )$$ but $\sum_{k=1}^{d}(p^{j}_{[k]}+\sum_{i=1}^{m}q^{j}_{[k],i})=d(p^j +\sum_{i}q_{i}^j )$ is odd only if $d$ is odd. If $d$ is even, choose instead $p^{j}_{[1]}=2p^j ,q^{j}_{[1]}=2q^j $, so that $ | p^{j}_{[1]}-q^{j}_{[1]   }\cdot \omega  | <2c_{W}\psi ( | q^j |  )$, and $\sum_{k=1}^{d}(p^{j}_{[k]}+\sum_{}q^{j}_{[k],i})=(2d+1)(p^j +q^j )$ is indeed odd.  This sequence demonstrates that $\bo$ is $\psi $-SWA.

\end{proof}

\subsection{Variance bounds}

Let us start by the proof of Theorem \ref{thm:general-var}, since it will be used for the proof of Theorem \ref{thm:var-cancel} (and Proposition  \ref{prop:structure}).

\subsubsection{Proof of Theorem \ref{thm:general-var}}
\label{sec:prf-main-var}

The starting point is the following lemma, straightforward consequence of  \cite[Lemma 2]{BST}.
\begin{lemma}
\label{lm:cov-ind}
We have for every $u\in \mathbb{R}$ coefficients $\alpha_{n,u}\geqslant 0,n\in \mathbb{N} $ such that for two centred standard Gaussian variables $X,Y$ with correlation $\rho $
\begin{align}
\label{eq:cov-indic}
\Gamma _{u}(\rho ):=\text{\rm{Cov}}( \mathbf{1}_{\{X>u\}},\mathbf{1}_{\{Y>u\}})=\sum_{n=1}^{\infty }\alpha_{n,u}\rho ^{n} =\frac{1}{2\pi }\int_{0}^{\rho }\frac{1}{\sqrt{1-r^{2}}}\exp\left(
-\frac{u^{2}}{1+r}
\right)dr
\end{align}
in particular,  $ \Gamma_{0}(\rho )=\text{\rm{arcsin}}(\rho ) $  with $\alpha_{2n,0}=0$ and $$\alpha _{2n+1}:=\alpha_{2n+1,0}=\frac{{n\choose 2n}}{4^{n}(2n+1)}=\Theta n^{-3/2}.$$
We also have $\alpha _{2,u}\neq 0$ for $u\neq 0$.
\end{lemma}
Let $\U_{n}=\sum_{i=1}^{n}X_{i}$ where the $X_{i}$ are independent and identically distributed with law $\bmu .$ 
Denote by $\gamma ^{\star 2}$ the auto-convolution of $\gamma $ with itself, and by $\bmu ^{n}$ the law of $\U_{n}$.
We have by Lemma \ref{lm:cov-ind}  
\begin{align}
\notag
V_{\bmu }^{\gamma }(T)=&\int_{(\mathbb{R}^{d})^{2}}\Gamma _{u}(C(\t-\s))\gamma  (\t/T)\gamma  (\s/T)d\t d\s \\
\notag=&\int_{(\mathbb{R}^{d})^{2}}\Gamma _{u}(C(\z))\gamma \left(
\frac{\z+{\bf w}}{2T}
\right)\gamma  \left(
\frac{{\bf w}-\z}{2T}
\right)d{\bf w}d\z\\
\notag=&\int_{\mathbb{R}^{d}}\Gamma _{u}(C(\z))\gamma _{2T}^{\star 2}(2\z)d\z\\
\notag=&\sum_{n\in \mathbb{N}}\alpha _{n,u}\int_{}C(\z)^{n}\gamma _{ 2T}^{\star 2}(2\z)d\z\\
\notag=&\sum_{n\in \mathbb{N}}\alpha _{n,u}\int_{} \bmu ^{n}(d\z)\hat\gamma _{ 2T}(2\z)^{2}d\z\text{\rm{ using \eqref
{eq:def-spectral}}}\\
\notag=&\sum_{n\in \mathbb{N}}\alpha _{n,u}\int_{}\bmu ^{n}(d\z)(2T)^{2d}\hat \gamma  (4T\z)^{2}d\z\\
\label{eq:var-comput}
=&(2T)^{2d}\sum_{n\in \mathbb{N}}\alpha _{n,u}\mathbb{E}(\hat\gamma  (4T\U_{n})^{2})\\
\notag=& 2^{2d}(v_{T} ^{(1)}+v_{T} ^{(2)})
\end{align}
where, with $A_{1}=[0,r],A_{2}=]r ,\infty ]$
\begin{align*}
v_{T}^{(i)}=T^{2d}\sum_{n\in \mathbb{N}} \alpha _{n,u}\mathbb{E}(\hat\gamma (4T\U_{n})^{2}\mathbf{1}_{\{\|T\U_{n}\|\in A_{i}\}}).
\end{align*}

  For the case $u\neq 0$, point (iii) simply comes by lower bounding by the term corresponding to $n=2$.

Let us now focus on the case $u=0$. Remark first that since $\bmu$ is $\mathbb{Z} $-free, $\mathbb{P}(\U_{n}=0)=0$ for $n$ odd, hence
\begin{align*}
v_{T}^{(1)}\geqslant  &T^{2d}c_{r}^{-}\K(rT^{-1})
\end{align*}
hence \eqref{eq:var-lower} is proved.
For the second point, the hypothesis on $\gamma $ yields
\begin{align*}
v_{T}^{(2)}\leqslant & T^{2d}\sum_{n\odd}\alpha _{n}\mathbb{E}( c_{1}^{2}\|T\U_{n}\| ^{-d-1}\mathbf{1}_{\{\|T\U_{n}\|>r\}}) \\
=& c_{1}^{2}T^{2d}\sum_{n\odd}\alpha _{n}\int_{0}^{r^{-d-1} }\mathbb{P}((T\|\U_{n}\|)^{-d-1}>y)dy\\
=& c_{1}^{2} T^{2d}T^{-d-1}\sum_{n\odd}\alpha _{n}\int_{0}^{(T/r)^{d+1}} \mathbb{P}(\|\U_{n}\|<y^{-\frac{1}{d+1}}) dy\\
=&c_{1}^{2}  T^{d-1}\int_{0}^{(T/r)^{d+1}}\K (y^{-\frac{1}{d+1}})dy.
\end{align*}
To conclude the proof of (ii), let us assume that $c_{-}\varepsilon ^{\alpha }\leqslant \K (\varepsilon )\leqslant c_{+} \varepsilon ^{\alpha }$ as $\varepsilon \to 0$ for some $0<c_{-}\leqslant c_{+}<\infty $, and let us prove that $\alpha \leqslant d+1$. If $\alpha >d+1$, then the second term on the right hand side of \ref{eq:var-upper} is negligible with respect to the first one (recall that $\K$ is uniformly bounded), we have in particular 
\begin{align*}
c_{r}^{-}c_{-}r^{\alpha } \leqslant T^{\alpha -2d}V_{\bmu}(T) \leqslant c_{r}^{+}c_{+}r^{\alpha }+o_{T\to \infty }(T).
\end{align*} 
Since this is true for all $r$, we have in particular for $0<r_{1}<r_{2}$
\begin{align*}
c_{r_{1}}^{-}c_{-}r_{1}^{\alpha }\leqslant c_{r_{2}}^{+}c_{+}r_{2}^{\alpha }
\end{align*}
which is impossible if we let $r_{2}$ go to $0$ faster than $r_{1}$.

\subsubsection{Proof of Theorem \ref{thm:var-cancel}}
 \label{sec:proof-var-cancel}

We study the field at the original scale $X_{d}$, it is then straightforward to deduce the results for $X_{d,T},T>0.$
We need to estimate $\mathbb{P}(\|\U_{n}\|\leqslant \varepsilon )$ for $n\geqslant 1$, where $ \U_{n}$ is the random walk which increment measure is $ \bmu _{d}$. Equation \eqref{eq:rpw-cov} and the universal bound $|B_{a}(\t)|\leqslant \Theta \|\t\|^{-1/2},\t\in \mathbb{R}^{d}$ yield $ | C_{d}(\t) | \leqslant \Theta (1+ \| \t \| )^{-\frac{ d-1}{2}}$. Then  
\begin{align*}
\mathbb{P}(\|\U_{n}\|\leqslant \varepsilon )=\int_{}\mathbf{1}_{\{B_{d }(0,\varepsilon )\}}(\z)\bmu_{d} ^{n}(d\z)&\leqslant \varepsilon ^{d}\int_{}\mathbf{1}_{\{B_{d }(0,1)\}}(\x)\left|
\int_{\mathbb{R}^{d}}C(\t)^{n}e^{i\varepsilon \x\t}d\t
\right| d\x\\
&\leqslant \Theta \varepsilon ^{d}\int_{\mathbb{R}^{d}}(1+\|\t\|)^{-n\frac{ d-1}{2}}d\t,
\end{align*}
hence for some $c_{5+}<\infty ,$ $\mathbb{P}(\|\U_{n}\|\leqslant \varepsilon )\leqslant c_{5+} \varepsilon ^{d}<\infty $ for $n\geqslant 5$ (and $ n\geqslant 3 $ if $ d\geqslant 4 $).
 We still have to deal with $ 1\leqslant n\leqslant 4 $, and independently with the lower bounds. Let us analyse  the self-convoluted measures $\bmu _{d}^{n},n\geqslant 1$. They are related by the recurrence relation, based on the isotropy of the measures $\bmu^{n} ,n\geqslant 1,$ 
\begin{align}
\notag
\bmu _d^{n+1}(B_{d}(0,r))=&\int_{\bS^{d-1}\times \mathbb{R}^{d}}\mathbf{1}_{\{s+x\in B_{d}(0,r)\}}\bmu _{d}^{n}(ds)\bmu _{d}(dx)\\
\label{eq:mun-recurrence}=&\bmu _{d}(\bS^{d-1})\bmu _d^{n}(\{x:x+e_{1}\in B_{d}(0,r)\})\\
=&\bmu _d^{n}(B_{d}(-e_{1},r))=\bmu _d^{n}(B_{d}(e_{1},r))
\end{align}
where $e_{1}$ is some vector of $\bS^{d-1}$, e.g. $e_{1}=(1,0,\dots ,0).$ 
Let $r>0.$
Hence\begin{align}
\label{eq:mu2}
\bmu_{d} ^{2}(B_{d}(0,r)) =\bmu _{d}( B_{d}(e_{1},r )) 
\end{align}
which is equivalent to $\Leb[d-1](B_{d-1}(0,r))$ as $r \to 0$. Theorem \ref{thm:general-var}-(iii) implies in the case $u\neq 0$  that $$V_{\bmu}^{\gamma,u}(T)\geqslant 2^{2d} c_{r}^{-}\alpha _{2,u} T^{2d}\mu _{d}^{2}(B_{d}(0,rT^{-1}) )\geqslant c_{2,-}T ^{d+1},\varepsilon >0$$
for some $c_{2,-}>0$ (for $r>0$ sufficiently small, see Example \ref{ex:sphere}). 

Since $\alpha _{2,0}=\alpha _{4,0}=0$ (Lemma \ref{lm:cov-ind}),
to treat the case $u=0$ it  remains to study $\bmu _{d}^{3}$ (only for $ d=2$ and $ d=3$). Using \eqref{eq:mun-recurrence}-\eqref{eq:mu2} easily yields constants $c_{3,-},c_{3,+}$ such that $0<c_{3,-}\varepsilon ^{d}\leqslant \bmu _{d}^{3}(B_{d}(0,\varepsilon ))\leqslant c_{3,+}\varepsilon ^{d}<\infty $ as $\varepsilon \to 0$.
Hence
\begin{align*}
\alpha _{3}c_{3,-}\varepsilon ^{d}\leqslant \K (\varepsilon )=\sum_{n\geqslant 3,n\odd} \alpha _{n}\mathbb{P}(\|\U_{n}\|\leqslant \varepsilon )\leqslant \alpha _{3}c_{3,+}\varepsilon ^{d}+\sum_{n=5}^{\infty }\alpha _{n}c_{5+}\varepsilon ^{d}
\end{align*}
gives the desired upper and lower bounds for $u=0$ (using Theorem \ref{thm:general-var}-(i),(ii)).

 \subsubsection{Proof of Proposition \ref{prop:isotropic}}
 \label{sec:isotropic}
 
 The statement in the case $\bmu^{n}(B(0,\varepsilon ))\geqslant c\varepsilon ^{d}, n$ odd, follows immediately from \eqref{eq:var-lower}. If $x\neq 0$, we have $\mu ^{n+m}(B_{d}(0,\varepsilon ))\geqslant c'\varepsilon ^{d}$ for $m> | x | /\varepsilon $ even, for some $c'>0.$
 Let us prove that this is the case if $\bmu$ is isotropic. 
 There is $b>0$ such that $\bmu(A_{b})>0$ where
\begin{align*}
A_{b}=\{\x\in \mathbb{R}^{d}:\|\x\|\in [b,b+1]\}.
\end{align*}
 Up to lower bounding $\bmu$ by $\bmu\mathbf{1}_{A_{d}}$, assume without loss of generality that $\bmu$'s support is contained in $A_{b}$. By isotropy there is a measure $\nu $ on $[b,b+1]$   such that $\bmu $ can be decomposed in $\bmu=\bmu_{d}\times \nu $ in polar coordinates, where $\bmu_{d}$ is the uniform measure on the $d$-dimensional sphere (see Section \ref{sec:proof-var-cancel}). 
 We have
\begin{align*}
C(\t)=&\int_{\mathbb{R}^{d}}\exp(-i\x\cdot \t)\bmu(d\x)\\
=&\int_{b}^{b+1}B_{0}(r\|\t\|)\nu (dr)\\\leqslant& \int_{b}^{b+1}\Theta (1+r\|\t\|)^{-\frac{ 1}{2}}\nu (dr)\\
\leqslant& \Theta (1+b\|\t\|)^{-\frac{ 1}{2}},\t\in \mathbb{R}^{d}.
\end{align*}
 It follows that $C^{2d+1}\in L^{1}(\mathbb{R}^{d})$, hence $\bmu^{2d+1}$ has a bounded continuous density $f$, there is in particular $\x\in \mathbb{R}^{d},r>0,c>0$ such that $f\geqslant c>0$ on $B(\x,r)$. For $m\geqslant 1$, $\mu ^{(2d+1)m}$ hence has a positive density on $B(\x,mr)$, and for $m$ sufficiently large, $\mu ^{(2d+1)m}$ has a positive density on $B(0,1)$. Since $n=(2d+1)m$ is odd for $m$ odd, we indeed have $\mu ^{n}(B_{d}(0,\varepsilon ))\geqslant c'\varepsilon ^{d}$ for some $c'>0.$

 \subsubsection{Proof of Proposition \ref{prop:structure}}
 \label{sec:prf-structure}
 
 Recall from Lemma \ref{lm:cov-ind} that 
\begin{align*}
 \text{\rm{Cov}}(\mathbf{1}_{\{0\in E\}},\mathbf{1}_{\{\t\in E \}})=\sum_{n\odd}\alpha _{n}C(\t)^{n},\t\in \mathbb{R}^{d}
\end{align*}where $ C$ is the reduced covariance function of $ X_{\bo}$. Hence we are looking for $ \mathcal S$ satisfying for $ \varphi $ smooth with compact support
\begin{align*}
\int_{ \mathbb{R}^{d}}\hat \varphi (\x) \mathcal S(d\x)=&\int_{ \mathbb{R}^{d}}\varphi (\t)\sum_{n\odd}\alpha _{n}C (\t)^{n}d\t\\
=& \int_{ \mathbb{R}^{d}}\varphi (\t)\sum_{n\odd}\alpha _{n}\left(
\int_{ \mathbb{R}^{d}}e^{i\t\cdot \x}\bmu(d\t) 
\right) ^{n}d\x
\end{align*} hence 
\begin{align*}
\mathcal S=\sum_{n\odd}\alpha _{n}\bmu ^{n}.
\end{align*}

  Then using  \eqref{eq:Ib0-upper} in the context of Example \ref{ex:standard},
\begin{align*}
\S(B_{d}(0,\varepsilon ))\leqslant &c\sum_{n\odd}\alpha _{n}\bmu ^{n}(B_{d}(0,\varepsilon ))\leqslant c'\varepsilon ^{\frac{ 1+d(m+1)}{m+\eta }}.
\end{align*}
 
\subsection{Proof of Theorem \ref{thm:dioph-rw}}

{\bf Notation.} 
We specify here the notation $A=\Theta B$ to indicate that there are finite  constants $c,c'>0$ depending  on $ m,d,\psi ,\beta   $   and not (further) on $\bo,\varepsilon ,T,n$ such that $A\leqslant cB,B\leqslant c'A$.

Also, for a $d$-tuple of vectors of $\mathbb{R}^{m+1}$, $\bar \x=(\bar x_{[1]},\dots ,\bar x_{[d]})\in (\mathbb{R}^{m+1})^{d} $ with $\bar x_{[k]}=(x_{[k],0},\dots ,x_{[k],m})\in \mathbb{R}^{m+1}$, remove the bar when the $0$-th component is removed from each vector: 
\begin{align*}
x_{[k]}=(x_{[k],1},\dots ,x_{[k],m}),\hspace{1cm}\x=(x_{[1]}\dots ,x_{ [d] }).
\end{align*}
Euclidean norms in $\mathbb{R}^{m}$ are denoted by a single bar and in $(\mathbb{R}^{m})^{d}$ by two bars: 
\begin{align*}
|x_{[k]}|^{2}=\sum_{i=1}^{m}x_{[k],i}^{2},\;\|\x\|^{2}=\sum_{k=1}^{d}|x_{[k]}|^{2}.
\end{align*}
We also define for $q\in \mathbb{Z} ^{m},\omega \in \mathbb{R}^{m}$ 
\begin{align*}
\d_{q}(\omega )=\inf_{p\in \mathbb{Z} } | p-q\cdot \omega  | .
\end{align*}

\begin{lemma}
\label{lm:Ie-dispo} 
Let $\omega\in \mathbb{R}^m  $ that is $\psi $-BA.
For $ 1/2\geqslant \varepsilon >0$, define 
\begin{align*}
I_{\varepsilon }(\omega ) :=\{q\in \mathbb{Z} ^m \setminus  \{0\}:0<\d_{q}(\omega )\leqslant  \varepsilon \}.
\end{align*} 
Let $q^{(N)},N\in \mathbb{N}^{*} $, the elements of $I_{\varepsilon } $ ordered by increasing radius. Then
\begin{align}
\label{eq:qN} | q^{(N)} | \geqslant  \Theta N^{\frac{1}{m}}\psi ^{-1}(\varepsilon ).
\end{align}
 In particular, we prove the following estimate:
\begin{align}
\label{eq:dispo}
\sum_{q\in I_{\varepsilon }}\exp(-\Theta n^{-1}|q|^{2})\leqslant \sum_{N=1}^{\infty }\exp(-\Theta n^{-1}N^{\frac{2}{m}}\psi  ^{-1}(\varepsilon )^{2})\leqslant \Theta n^{\frac{m}{2}}\psi^{-1}(\varepsilon )^{-m}.
\end{align} 
\end{lemma}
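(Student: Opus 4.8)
The plan is to establish the lower bound \eqref{eq:qN} first, and then obtain \eqref{eq:dispo} as a quick corollary by a summation argument.

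For \eqref{eq:qN}, fix $\varepsilon \in (0,1/2]$ and recall $I_\varepsilon(\omega)=\{q\in\mathbb{Z}^m\setminus\{0\}: 0<\d_q(\omega)\le\varepsilon\}$. The key observation is that every $q\in I_\varepsilon$ must have large Euclidean norm: since $\omega$ is $\psi$-BA, for $|q|$ large enough we have $\d_q(\omega)\ge 2\psi(|q|)$, so $q\in I_\varepsilon$ forces $2\psi(|q|)\le\d_q(\omega)\le\varepsilon$, i.e. $\psi(|q|)\le\varepsilon/2\le\varepsilon$, which by the definition \eqref{eq:pseudo-inv} of the pseudo-inverse and monotonicity-type regularity gives $|q|\ge\psi^{-1}(\varepsilon)$ (up to a $\Theta$-constant absorbing the finitely many small exceptional $q$ and the factor $2$ via the regularity estimates $c_1\psi(q)\le\psi(rq)\le c_2\psi(q)$). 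Thus $I_\varepsilon$ is contained in the complement of a Euclidean ball of radius $\Theta\,\psi^{-1}(\varepsilon)$. Now I count: if $q^{(1)},q^{(2)},\dots$ are the elements of $I_\varepsilon$ ordered by increasing norm, then the first $N$ of them lie in the annulus $\{\Theta\psi^{-1}(\varepsilon)\le|q|\le|q^{(N)}|\}$, hence $N\le \#(\mathbb{Z}^m\cap B_m(0,|q^{(N)}|))\le \Theta(1+|q^{(N)}|)^m\le\Theta|q^{(N)}|^m$, using that $|q^{(N)}|\ge|q^{(1)}|\ge\Theta\psi^{-1}(\varepsilon)\ge\Theta$ (as $\varepsilon<1/2$ forces the pseudo-inverse to exceed a fixed threshold). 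Rearranging gives $|q^{(N)}|\ge\Theta N^{1/m}$, but I can do slightly better: since moreover $|q^{(N)}|\ge\Theta\psi^{-1}(\varepsilon)$, combining the two bounds $|q^{(N)}|\ge\Theta N^{1/m}$ and $|q^{(N)}|\ge\Theta\psi^{-1}(\varepsilon)$ via $\sqrt{ab}\le\frac{a+b}{2}$ — or rather, noting the counting bound in the annulus of inner radius $\Theta\psi^{-1}(\varepsilon)$ actually yields $N\le\Theta(|q^{(N)}|^m-(\psi^{-1}(\varepsilon))^m)$ when $|q^{(N)}|\ge 2\Theta\psi^{-1}(\varepsilon)$, hence $|q^{(N)}|^m\ge\Theta(N+(\psi^{-1}(\varepsilon))^m)\ge\Theta N(\psi^{-1}(\varepsilon))^{?}$ — the cleanest route is: the lattice points of $I_\varepsilon$ with $|q|\le R$ number at most $\Theta((R/\psi^{-1}(\varepsilon))^m\wedge R^m)$... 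Let me instead argue directly that $I_\varepsilon$ meets the ball of radius $R$ in at most $\Theta(R/\psi^{-1}(\varepsilon))^m$ points for $R\ge\psi^{-1}(\varepsilon)$: rescale. Actually the simplest rigorous statement giving \eqref{eq:qN} is to prove $\#\{q\in I_\varepsilon:|q|\le R\}\le\Theta(R^m)/(\psi^{-1}(\varepsilon))^{m-?}$; I will spell this out as: every point of $I_\varepsilon$ has norm $\ge\Theta\psi^{-1}(\varepsilon)$, so they are "spread out" and $\#\{q\in I_\varepsilon:|q|\le R\}\le\Theta (R^m - (\psi^{-1}(\varepsilon))^m)_+ + 1$, and plugging $R=|q^{(N)}|$ gives $|q^{(N)}|^m\ge\Theta N + (\psi^{-1}(\varepsilon))^m - \Theta\ge\Theta(N(\psi^{-1}(\varepsilon))^0)$...

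I realize the bound \eqref{eq:qN} claims $|q^{(N)}|\ge\Theta N^{1/m}\psi^{-1}(\varepsilon)$, a product, which is stronger than what a naive annulus count gives. The correct mechanism: if $q,q'\in I_\varepsilon$ are distinct then $q-q'$ satisfies $\d_{q-q'}(\omega)\le\d_q(\omega)+\d_{q'}(\omega)\le 2\varepsilon\le 1$, and if $q\ne q'$ with $q-q'\ne 0$ then either $\d_{q-q'}(\omega)>0$, in which case $q-q'\in I_{2\varepsilon}$ so $|q-q'|\ge\Theta\psi^{-1}(2\varepsilon)\ge\Theta\psi^{-1}(\varepsilon)$, or $\d_{q-q'}(\omega)=0$ which is impossible since $\omega$ being $\psi$-BA (non-vanishing $\psi$) implies $\omega$ is $\mathbb{Z}$-free (no integer relation), so $\d_{q-q'}(\omega)=0$ forces $q=q'$. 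Hence the points of $I_\varepsilon$ are $\Theta\psi^{-1}(\varepsilon)$-separated in $\mathbb{R}^m$. A standard volume/packing argument then gives $\#\{q\in I_\varepsilon:|q|\le R\}\le\Theta(R/\psi^{-1}(\varepsilon) + 1)^m$, and since all such $q$ also satisfy $|q|\ge\Theta\psi^{-1}(\varepsilon)\ge\Theta$, for $R=|q^{(N)}|\ge\Theta\psi^{-1}(\varepsilon)$ the "$+1$" is absorbed and $N\le\Theta(|q^{(N)}|/\psi^{-1}(\varepsilon))^m$, which rearranges exactly to \eqref{eq:qN}. This separation argument is the main obstacle — getting the product structure $N^{1/m}\psi^{-1}(\varepsilon)$ rather than a sum requires recognizing that $\psi$-BA gives a uniform gap between all pairs, not just a lower bound on individual norms.

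Finally, \eqref{eq:dispo} follows by monotone rearrangement: $\sum_{q\in I_\varepsilon}\exp(-\Theta n^{-1}|q|^2)=\sum_{N\ge 1}\exp(-\Theta n^{-1}|q^{(N)}|^2)\le\sum_{N\ge 1}\exp(-\Theta n^{-1}N^{2/m}\psi^{-1}(\varepsilon)^2)$ by \eqref{eq:qN}. For the last inequality I compare the sum to the integral $\int_0^\infty\exp(-a t^{2/m})\,dt$ with $a=\Theta n^{-1}\psi^{-1}(\varepsilon)^2$; the substitution $s=a^{m/2}t$ gives $\int_0^\infty\exp(-a t^{2/m})\,dt=a^{-m/2}\int_0^\infty\exp(-s^{2/m})\,ds=\Theta\,a^{-m/2}$ since $\int_0^\infty e^{-s^{2/m}}ds<\infty$ (as $2/m>0$), yielding $\Theta (n/\psi^{-1}(\varepsilon)^2)^{m/2}=\Theta\,n^{m/2}\psi^{-1}(\varepsilon)^{-m}$. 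The only care needed is the standard sum-versus-integral comparison for the decreasing function $t\mapsto\exp(-at^{2/m})$, plus noting the first term is $\le 1$, which together give the stated bound and conclude the lemma.
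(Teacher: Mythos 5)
Your final argument is correct and is essentially the paper's own proof: the key mechanism, which you correctly identify after discarding the naive annulus count, is that the $\psi$-BA property applied to differences $q-q'$ (via $\d_{q-q'}(\omega)\leqslant \d_q(\omega)+\d_{q'}(\omega)\leqslant 2\varepsilon$) makes the points of $I_{\varepsilon}$ pairwise $\Theta\,\psi^{-1}(\varepsilon)$-separated, so a disjoint-balls packing count gives $N\leqslant\Theta\left(|q^{(N)}|/\psi^{-1}(\varepsilon)\right)^{m}$, i.e.\ \eqref{eq:qN}. The sum--integral comparison you use for \eqref{eq:dispo} is likewise the one in the paper, so there is no gap.
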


\begin{proof} 
The starting point is that for $q\in I_{\varepsilon } $, since $\varepsilon>\varepsilon/2 \geqslant \d_{q}(\omega )/2\geqslant    \psi ( | q | )$, we have $ | q  | \geqslant  \rho :=\psi ^{-1}(\varepsilon ).$
And the triangular inequality yields for $q\neq q '\in I_{\varepsilon } ,$
\begin{align*}
2\psi (q-q')\leqslant \d_{q-q'}(\omega )  \leqslant 2\varepsilon ,
\end{align*}hence  $
 | q-q' | \geqslant  \rho $ as well. 
It follows that all $q\in I_{\varepsilon }$ are pairwise distant by more than $\rho $, and the  balls $B_{m}(q,\rho /2),q\in I_{\varepsilon }$  are disjoint. Hence for $N_{0}\in \mathbb{N}^*,$ the total $\Leb[m]-$ measure occupied by the $B_{m}(q^{(N)},\rho/2 ),N\leqslant N_{0}$ is larger than $\Theta N_{0}\rho ^m $. This volume is necessarily smaller than the volume of the ball with  radius $ | q ^{(N_{0})} | +\rho /2\leqslant 2 | q ^{(N_{0})} | $, hence 
\begin{align*}
\Theta N_{0}\rho ^m    \leqslant \Theta  | q ^{(N_{0})} | ^m 
\end{align*}
which yields  \eqref{eq:qN}. Finally
\eqref{eq:dispo}
follows from
\begin{align*}
\sum_{N =1}^{\infty }\exp(-\Theta n^{-1}(N ^{\frac{1}{m} }\psi  ^{-1}(\varepsilon ))^{2})  &\leqslant 2   \int_{1/2}^{\infty }\exp(-\Theta (n^{-\frac{m}{2}}\psi  ^{-1}(\varepsilon )^{m}y)^{\frac{2}{m} })dy\\
&\leqslant \Theta n^{\frac{m}{2}}\psi  ^{-1}(\varepsilon )^{-m}.\\
\end{align*}
\end{proof}

\begin{proof}[Proof of Theorem \ref{thm:dioph-rw}] Let $M=d(m+1).$
The proof is based on the study of the symmetric random walk $(S_{n})_{n}$ on $\mathbb{Z} ^{M}$ with independent increments defined by $S_{0}=0$ and
\begin{align*}
\mathbb{P}(S_{n+1}=S_{n}\pm \e_{j})=\frac{1}{2M},1\leqslant j\leqslant M,
\end{align*}
where $(\e_{j})_{j}$ is some basis of $\mathbb{R}^{d}$.
 Following the notation introduced above, denote also $ \bar \omega _{[k]}=(1,\omega _{[k]})$ and $ \bar \bo=(\bar \omega _{[k]})_{k}.$

 For $\bar q_{[k]}=(q_{[k],0},q_{[k]})\in \mathbb{Z} ^{m+1},1\leqslant k\leqslant d,\bar \q=(\bar q_{[1]},\dots ,\bar q_{[d]})\in (\mathbb{Z} ^{m+1})^{d}\approx\mathbb{Z} ^{M}$,   denote by  $q_{[k]}=(q_{[k],1},\dots ,q_{[k],m})\in \mathbb{Z} ^{m},\q=(q_{[k]})_{k}\in (\mathbb{Z} ^{m})^{d}$.  
We define
\begin{align*}
 \bar \q\otimes \bar \bo := \q_{0}-(\bar q_{[k]}\cdot   \bar \omega _{[k]})_{k=1,...,d}\end{align*}where $\q_{0}=(q_{[k],0})_{k=1}^{d}$, so that we have the equality in law
$
\U_{n}\equlaw S_{n}\otimes \bar \bo.$

 We use the notation, for $\x=(x_{[1]},\dots ,x_{[d]})\in \mathbb{Z} ^{d},K\subset \llbracket d\rrbracket $,
  $$ \bar \I^{\x,K}_{\varepsilon }(\bo)= \bar \I^{\x,K}_{\varepsilon } =\{\bar \q\in  \mathbb{Z} ^{M} :\bar q_{[k]}=0,k\notin K\text{\rm{ and }}0< |  \bar q_{[k]}\cdot  \bar \omega _{[k]}-x_{[k]}  | \leqslant \varepsilon,k\in K \} .$$

For $\varepsilon <1/2$, an element $\bar \q\in \bar \I^{\x,K}_{\varepsilon }$ satisfies the following for $k\in K$: \begin{align*}
| q_{[k],0}-q_{[k]}\cdot \omega _{[k]}- x_{[k]} | <\varepsilon ,
\end{align*}
hence since $ x_{[k]}\in \mathbb{Z} $ and $q_{[k]}\in I_{\varepsilon }(\omega _{[k]})$, $\omega _{[k]}\cdot q_{[k]} $ is $\varepsilon $-close to $\mathbb{Z} $. It follows that $q_{[k],0}$ depends explicitly on other coordinates
\begin{align}
\label{eq:jk0}
q_{[k],0}=&q_{[k],0}( x_{[k]},q_{[k]}):=\argmin_{p\in \mathbb{Z} } | p-q_{[k]}\cdot \omega _{[k]}-x_{[k]} |\\
\notag\q_{0}=&\q_{0}(\x,\q):=(q_{[k],0})_{k}.
\end{align}
In particular,  $ | x_{[k]} | \leqslant  | q_{[k],0} | + |q_{[k]}\cdot \omega _{[k]} | +1 $, and
\begin{align}
\label{eq:j2}
 \| \bar \q \| ^{2}=&\|\q_{0}\|^{2}+\|\q\|^{2}\geqslant \max(\|\q\|^{2},\|\q\|^{2}+\Theta( \|\x\|^{2}-1))\geqslant \Theta (\|\q\|^2+\|\x\|^{2}).
\end{align}
We also have the one-to-one correspondance
\begin{align}
\label{eq:1to1}
\I^{\x,K}_{\varepsilon }(\bo):&=\{\q\in \prod_{k\in K}\mathbb{Z} ^{m }: (\q_{0}(\x;\q),\q)\in \bar \I_{\varepsilon }^{\x,K}(\bo) ,k\in K\}\\
&\equiv \{\o\}^{d-|K|}\times \prod_{k\in K}I_{\varepsilon }(\omega _{[k]}).
\end{align}
 
{\bf Proof of (i):}
By the Gaussian approximation   Lemma \ref{lm:Gauss-approx} (below), and \eqref{eq:j2}, \begin{align}
\notag p_{n}^{\x,K}(\varepsilon )&=\sum_{\bar \q\in \bar \I^{\x,K}_{\varepsilon }}\mathbb{P}( S_{n} =\bar \q )=\sum_{  \q\in  \I^{\x,K} _{\varepsilon }}\mathbb{P}( S_{n} = ( \q_{0}(\x,\q),\q ))\\
\notag&\leqslant \Theta \sum_{\q\in \I^{K,\x}_{\varepsilon }} n^{-\frac{M}{2}} {\exp(-\Theta n^{-1}( \| \q \| ^{2}+\|\q_{0}\|^{2}))}\\
\notag &\leqslant \Theta n^{-\frac{M}{2}}  \sum_{\q\in \I^{K,\x}_{\varepsilon }} {\exp(-\Theta n^{-1} \| \q \| ^{2})\exp(-\Theta n^{-1} \|\x\|^2 ))} \\
\label{eq:pnx}&\leqslant \Theta n^{-\frac{M}{2}} \exp(-\Theta n^{-1}\|\x\|^{2})   \prod_{k\in K}\sum_{q_{[k]}\in I_{\varepsilon }(\omega _{[k]})}\exp(-\Theta n^{-1} | q_{[k]} | ^{2} ))\text{\rm{ by \eqref{eq:1to1}}}\\
\notag&\leqslant \Theta n^{-\frac{d(m+1)}{2}}\exp(-\Theta n^{-1}\|\x\|^{2}) (n^{\frac{m  }{2}}\psi^{-1}(\varepsilon )^{-m})^{|K|} \;\;\text{\rm{ with   \eqref{eq:dispo},}}\\
\notag&\leqslant \Theta n^{-d/2}n^{-\frac{(d-|K|) m}{2}}\psi^{-1}(\varepsilon )^{-m|K|}\exp(-\Theta n^{-1}\|\x\|^{2})  
\end{align}
and \eqref{eq:pn0-upper} is proved.

The bound  \eqref{eq:pn-upper}
immediately stems from $\bar p^{K}_{n}=\sum_{\x\in \mathbb{Z} ^{d}} p_{n}^{\x,K}$ and Lemma \ref{lm:Gauss-sum} (after summing over $i\in \{0,1\}$). 
Hence  using \eqref{eq:dispo}, and \eqref{eq:pnx} with $\x=0$
\begin{align*}
\I_{\beta }^{\o}(\varepsilon ):=&\sum_{n\geqslant n_{\varepsilon } }n^{-\beta/2 }p_{n}^{\o}(\varepsilon )\\
=&\sum_{n\geqslant n_{\varepsilon }}n^{-\beta /2}\sum_{K\neq \emptyset }p_{n}^{\o,K}(\varepsilon )\\
\leqslant&   \Theta {\bf K}_{\beta}(\varepsilon )\\
\text{\rm{ with }}{\bf K}_{\beta }(\varepsilon ):=&    \sum_{n\geqslant n_{\varepsilon } }n^{-\beta/2 -\frac{M}{2}}\sum_{K\subset \llbracket d\rrbracket ,K\neq \emptyset } \prod_{k\in K}  \sum_{N_{k}=1}^{\infty }\exp(-\Theta n^{-1}(N_{k}^{\frac{1}{m }}\psi^{-1}(\varepsilon ))^{2})\\
\leqslant &    \sum_{n\geqslant n_{\varepsilon } }n^{-\beta/2 -\frac{M}{2}}\sum_{K\subset \llbracket d\rrbracket ,K\neq \emptyset } \sum_{N_{k}\geqslant 1,k\in K}^{\infty }\exp(-\Theta n^{-1}\sum_{k\in K }(N_{k}^{\frac{1}{m }}\psi^{-1}(\varepsilon ))^{2})\\
\leqslant  &\Theta \sum_{K\subset \llbracket d\rrbracket ,K\neq \emptyset} \sum_{N_{k}\geqslant 1,k\in K}\sum_{n\geqslant n_{\varepsilon } }\int_{n}^{n+1/2}(z-1/2)^{-\beta/2 -\frac{M}{2}}\\
&\hspace{5cm}\times    \exp(-\Theta z^{-1}\sum_{k\in K} \psi^{-1}(\varepsilon )^{2}N_{k}^{2/m } )dz\\
\leqslant & \Theta  \sum_{K\subset \llbracket d\rrbracket ,K\neq \emptyset }\sum_{N_{k}\geqslant 1,k\in K} (\sum_{k\in K}
\psi^{-1}(\varepsilon )^{2}
 N_{k}^{2/m })^{1-\beta/2 -\frac{M}{2}}\\
 &\hspace{5cm}\times \int_{0}^{\infty }y^{\beta/2 +\frac{M}{2}-2}\exp(-\Theta y )dy\\
 \leqslant &\Theta (\psi^{-1}(\varepsilon )^{2})^{1-\beta/2 -\frac{M}{2}}\max_{K\subset \llbracket d\rrbracket ,K\neq \emptyset }
 \int_{[1,\infty  ]^{  | K | }}(\sum_{k\in K}x_{k}^{\frac{2}{m }} )^{1-\beta/2 -\frac{M}{2}}\prod_{k\in K}dx_{k}\\
 \leqslant &\Theta \psi^{-1}(\varepsilon )^{2-M-\beta }\max_{1\leqslant p\leqslant d}\int_{ [1,\infty  ]^{p}}(\sum_{k=1}^{p}y_{k})^{1-\beta/2 -\frac{M}{2}}\prod_{k=1}^{p}y_{k}^{\frac{m }{2}-1}dy_{k}\\
 \leqslant &\Theta \psi^{-1}(\varepsilon )^{2-d(m+1)-\beta }\max_{1\leqslant p\leqslant d}\int_{1}^{\infty }(\Theta r)^{1-\beta/2 -\frac{(m+1)d}{2}}r^{mp/2-p}r^{p-1}dr 
 \end{align*}
and the integral converges if  $\beta/2 >1-d/2$.
Since there are less terms in $\J_{\beta }(\varepsilon )$ than in $\I_{\beta }^{\o}(\varepsilon ),$ the upper bound holds and \eqref{eq:Ib0-upper}  is proved.

With the  same computations, using first \eqref{eq:pnx},  and then\eqref{eq:dispo}, and Lemma \ref{lm:Gauss-sum},
\begin{align*}
\I_{\beta } (\varepsilon )
=&\sum_{n\geqslant n_{\varepsilon }}n^{-\beta /2}\sum_{\x\in \mathbb{Z} ^{d}}\sum_{K\neq \emptyset }p_{n}^{\x,K}(\varepsilon )\\
\leqslant & \sum_{n\geqslant n_{\varepsilon } }n^{-\beta/2 -\frac{M}{2}}\sum_{\x\in \mathbb{Z} ^{d}}\exp(-\Theta n^{-1}\x^{2})\\
&\hspace{3cm}\times \sum_{K\subset \llbracket d\rrbracket ,K\neq \emptyset } \prod_{k\in K}  \sum_{N_{k}=1}^{\infty }\exp(-\Theta n^{-1}(N_{k}^{\frac{1}{m }}\psi^{-1}(\varepsilon ))^{2})\\
\leqslant &  \Theta {\bf K}_{\beta -d}(\varepsilon )
\end{align*}
provided $\beta /2>1$ , which proves \eqref{eq:Ib-upper}.

  Let us conclude with the proof of {\bf (iii)}, the proof of {\bf (ii)} is similar and easier.  There are by hypothesis   infinitely many $q^j\in \mathbb{Z} ^{m},j\geqslant 1$ and $p^j_{[k]}\in \mathbb{Z},1\leqslant k\leqslant d, $ such that $\bar \q^j:=((p^j_{[k]},q^j))_{k}\equiv 1$ and  $$ | p^j_{[k]}-\omega _{[k]}\cdot q^j   |  \leqslant  c_{W}\psi  (|q^j|)=:c_{W}\varepsilon _j$$
(we have $\varepsilon _j\to 0$ because $\psi $ converges to $0$ by hypothesis).
We have in particular  with Cauchy-Schwarz inequality 
$$
\|\bar \q^j\|\leqslant \sum_{k=1}^{d} (| p^j_{[k]} | + | q^j | )\leqslant \sum_{k=1}^{d}( | \omega _{k} |  | q^j | +1+ | q^j | )\leqslant \Theta  | q^j | $$
and clearly the other inequality as well $ | q^j | \leqslant \|\bar \q^j\|$.

Then, by Lemma \ref{lm:Gauss-approx},  with $\tilde n_{j}:= {\cG^{-1} |\bar \q^j |\vee n_{\varepsilon _{j}}}$
\begin{align*}
\J_{\beta}(\varepsilon _j)&=\sum_{n\geqslant  n_{\varepsilon _{j}},n\text{\rm{ odd}}}n^{-\beta/2 }p_{n}^{\o}(\varepsilon _j)\\\geqslant &\sum_{n\geqslant n_{\varepsilon_{j} },n\text{\rm{ odd}}}n^{-\beta/2 }\mathbb{P}(S_{n}=\bar \q^j)\\ \geqslant &\Theta \sum_{n\geqslant \tilde n_{j},n\equiv \bar \q^j  \equiv 1}n^{-\beta/2 }n^{-\frac{d(m+1)}{2}}\exp(-\Theta n^{-1}  \| \bar  \q^j   \| ^{2})\\ 
\geqslant &\Theta \int_{[\tilde n_{j}/2] }^{\infty }y^{-\beta/2 -d\frac{m+1}{2}}\exp(-\Theta y^{-1} |   q^j  | ^{2})dy\\
\geqslant &\Theta| q^j  |^{2-\beta  -d(m+1) }\int_{0}^{\Theta    |  q^j  | ^{2}\tilde n_{j}^{-1}}z^{\beta/2 +d\frac{m+1}{2}-2}\exp(-\Theta z)dz\\
\geqslant & \Theta \psi^{-1}(\varepsilon _j)^{2-\beta  -d(m+1) }
\end{align*}
provided $\beta >0$, because $ |  q^j  | ^{2} \|  \bar\q^j  \| ^{-1}\geqslant \Theta >0$ and $n_{\varepsilon _{j}}\leqslant \psi ^{-1}(\varepsilon _{j})^{2}$ yields  (recalling $\psi (|q^{j}|)=\varepsilon _{j}$)
\begin{align*}
 | q^j  | ^{2}n_{\varepsilon_{j} }^{-1}\geqslant \psi ^{-1}(\varepsilon _{j})^{2}\psi ^{-1}(\varepsilon _{j})^{-2}=1,
\end{align*}
hence \eqref{eq:Jb0-lower} is proved. The proof of \eqref{eq:Ib-lower} is similar without the requirement that $\bar \q^j\equiv 1$, hence the sum is over all $n\geqslant n_{\varepsilon _{j}}$ (even and odd).

\end{proof}

\subsubsection{Gaussian approximation}

The following lemma quantifies how much  $S_{n}$ is close to a Gaussian distribution.

\begin{lemma}
\label{lm:Gauss-approx}
Let $\theta _{0}\in (0,\frac{ 1}{2}), M\in \mathbb{N}$ and $S_{n}$ be the symmetric random walk on $\mathbb{Z} ^{M}$ with weights $\theta _{i}\in (\theta _{0},1-\theta _{0}),1\leqslant i\leqslant M$, summing to $1$, i.e. 
\begin{align*}
\mathbb{P}(S_{n+1}=S_{n}\pm \e_{i})=\frac{\theta _{i}}{2},1\leqslant i\leqslant M,n\in \mathbb{N}.
\end{align*} 
For $\q=(\q_{i})\in \mathbb{Z} ^{M},n\in \mathbb{N}$, write $\q\equiv n$ if $\sum_{i=1}^{M}\q_{i}$ and $n$ have the same parity, and remark that $\mathbb{P}(S_{n}=\q)=0$ if $\q\not\equiv n.$ There is a constant $\cG>0$  such that
for $\q\in \mathbb{Z} ^M
$
\begin{align}
\label{eq:up-bnd-binom}
\mathbb{P}(S_{n}=\q)\leqslant& \Theta n^{-\frac{p}{2}}\exp(-\Theta n^{-1} \| \q \| ^{2})\\
\notag\mathbf{1}_{\{\|\q\|\leqslant \cG n\}}\mathbb{P}(S_{n}=\q)\geqslant& \Theta n^{-\frac{p}{2}}\exp(-\Theta n^{-1} \| \q \| ^{2})\mathbf{1}_{\{|\q|\leqslant \cG \,n\}}\text{\rm{ for }}\q\equiv n.
\end{align}\end{lemma}

\begin{remark}The constants involved in this result depend also on  $ \theta _{0}$.
\end{remark}

\begin{proof} 
\renewcommand{\N}{{\bf N}}
Let $N_{i}$ be the number of times direction $i$ has been chosen in the random walk, and  let $B_{i}\leqslant N_{i}$ be the number of $+\e_{i}$ increments, hence $N_{i}-B_{i}$ is the number of $-\e_{i}$ increments.
The $i$-th component of $S_{n}$ is therefore $S_{n,i}:=2B_{i}-N_{i}$. We  have $N_{i}\sim \mathcal{B}(n,\theta _{i}),B_{i}\sim \mathcal{B}(N_{i},1/2)$, and the $B_{i}$ are independent conditionally on $\N:=(N _{i})_{i}$. Hence for $ | \varepsilon |  \leqslant  \cB$, from Lemma \ref{lm:binomial-approx} 
\begin{align*}
\mathbb{P}(B_{i}=[N_{i}(1/2+\varepsilon)] \;|\;\N)=\Theta \exp(-\Theta N_{i}\varepsilon ^{2})N_{i}^{-1/2}.
\end{align*}
Let $\q=(\q_{i})\in \mathbb{Z} ^M$ such that for $1\leqslant i\leqslant p, | \q_{i} | \leqslant  \cB N_{i}$, let $\varepsilon_{i} =N_{i}^{-1}\q_{i}$,
\begin{align*}
\mathbb{P}(S_{n,i}=\q_{i}\;|\;\N )&=\mathbb{P}(B_{i}=N_{i}/2+\q_{i}/2\;|\;\N )\\
&=\begin{cases}
0$ if $N_{i}\not\equiv \q_{i}\\
  \Theta N_{i}^{-1/2}\exp(-\Theta N_{i}\varepsilon_{i} ^{2})=\Theta N_{i}^{-1/2}\exp(-\Theta N_{i}^{-1}\q_{i}^{2})$ otherwise.$
\end{cases}
\end{align*} 
Let
\begin{align*}
\cG:= {\cB}{(\min_{i}\theta _{i}}-\cB)>0.
\end{align*}
If for all $i$, $N_{i}>(\theta _{i}-\cB)n$ and $ | \q_{i} | <\cG n$, then $ | \q_{i} | <\cB N_{i}$ (and $N_{i}=\Theta n$) and we have the lower bound
\begin{align*}
\mathbb{P}(S_{n}=\q)=&\mathbb{E}(\mathbb{P}(S_{n}=\q \;|\;\N ))\\
=&\mathbb{E}(\mathbf{1}_{\{\q_{i}\equiv N_{i} ,\forall i\}}  \mathbb{P}(S_{n}=\q\;|\;\N ) )\\
\geqslant & \mathbb{E}(\mathbf{1}_{\{\q_{i}\equiv N_{i},N_{i}>(\theta _{i}-\cB)n,\forall i\}}\mathbb{P}(S_{n}=\q\;|\;\N ))\\
\geqslant & \mathbb{E}(\mathbf{1}_{\{\q_{i}\equiv N_{i},N_{i}>(\theta _{i}-\cB)n,\forall i\}}\Theta \prod_{i}N_{i}^{-\frac{1}{2}}\exp(-\Theta N_{i}^{-1}\q_{i}^{2}))\\
\geqslant & \mathbb{E}(\mathbf{1}_{\{\q_{i}\equiv N_{i},N_{i}>(\theta _{i}-\cB)n,\forall i\}}\Theta n^{-\frac{p}{2}}\exp(-\Theta n^{-1}\q^{2}))\\
\geqslant &\Theta n^{-\frac{p}{2}}\exp(-\Theta n^{-1}\q^{2})\mathbb{P}(\q_{i}\equiv N_{i},N_{i}>(\theta _{i}-\cB)n,\forall i).
\end{align*}
Since $\sum_{i}N_{i}\equiv n$,
if we do not have $\sum_{i}\q_{i}\equiv n$, we cannot have $N_{i}\equiv \q_{i}, \forall i.$ Otherwise, asymptotically a fraction $2^{-p}$ of admissible tuples $\N \in \llbracket n\rrbracket ^M$ are such that $N_{i}\equiv \q_{i}, \forall i$, hence $\mathbb{P}( \q_{i}\equiv N_{i},N_{i}>(\theta _{i}-\cB)n,\forall i)=\Theta\mathbf{1}_{\{\q\equiv n\}}\mathbb{P}(N_{i}>(\theta _{i}-\cB)n,\forall i) $ and the latter probability converges to $1$ thanks to Lemma \ref{lm:binomial-approx}
, hence the lower bound is proved.

The upper bound is a bit delicate.  Let us start by the trivial bound, if  $ | \q_{i} | >n$ for some $i$,$$\mathbb{P}(S_{n}=\q)=0\leqslant \Theta n^{-1/2}\exp(-\Theta n^{-1}\q_{i}^{2}).$$

Assume henceforth  that $ | \q_{i} | \leqslant n$ for all $i$. Let $\Omega $ be the event such that for some $i$, $N_{i}<\theta _{i}(1-\cB)n$.
 On $\Omega ^{c}$, $N_{i}=\Theta n$ for all $i$, hence by Lemma \ref{lm:Gauss-approx}
\begin{align*}
\mathbb{P}(S_{n,i}=\q_{i}\;|\;\Omega ^{c} )\leqslant  \Theta n^{-1/2}\exp(-\Theta n^{-1}\q_{i}^{2}).
\end{align*}
Finally, in all cases,
\begin{align*}
\mathbb{P}(S_{n}=\q)\leqslant & \mathbb{E}(\mathbf{1}_{\{\Omega^{c}\}} \prod_{i}\mathbb{P}(S_{n,i}=\q_{i}\;|\;\N ))+\mathbb{P}(\Omega )\\
\leqslant &\mathbb{E}(\mathbf{1}_{\{\Omega^{c}\}} \Theta \prod_{i} n^{-\frac{ 1 }{2}}\exp(-\Theta n^{-1}\q_{i}^{2} ))+\mathbb{P}(\Omega  )\\
\leqslant &n^{-p/2}\exp(-\Theta n^{-1}\q^{2})+\mathbb{P}(\Omega  ).
\end{align*}
 Then   Lemma \ref{lm:binomial-approx} with $\varepsilon =-\cB $ yields, using the decreasing of binomial probabilities around the mean,
\begin{align*}
\mathbb{P}(\Omega )\leqslant& \sum_{i}\sum_{k<[n(\theta _{i}-\cB )]}\mathbb{P}(N_{i}=k)\\
\leqslant &\sum_{i}  n\mathbb{P}(N_{i}=[n(\theta _{i}+\varepsilon )])\\
\leqslant &\Theta n^{1/2}\exp(-\Theta n)\\
\leqslant &\Theta n^{-\frac{p}{2}}\exp(-\Theta n/2)\\
\leqslant &\Theta n^{-\frac{p}{2}}\exp(-\Theta n^{-1}\q^{2}),
\end{align*}
using $ \| \q \| \leqslant n,$ which concludes the proof of \eqref{eq:up-bnd-binom}.
\end{proof}  

\begin{lemma}
\label{lm:Gauss-sum} 
For $i\in \{0,1\}$
\begin{align*}
 \sum_{\x\in \mathbb{Z} ^{d},\x\equiv i}\exp(-\Theta n^{-1}\x^{2})=\Theta n^{d/2}.
\end{align*}
where $\x\equiv i$ means that $\sum_{k=1}^{d}\x_{[k]}$ has the same parity as $i$.
\end{lemma}

\begin{proof}

The lower bound stems from $y^{2}\geqslant \min\limits_{\x\in \mathbb{Z} ^{d}\cap B(y,2),\x\equiv i\text{\rm{ or }}\x=0}\x^{2},y\in \mathbb{R}^{d},$ and
\begin{align*}
\Theta n^{d/2}\leqslant \int_{\mathbb{R}^{d}}\exp(-\Theta n^{-1}y^{2})dy\leqslant& \int_{}\max_{\x\in \mathbb{Z} ^{d}\cap B(y,2),\x\equiv i\text{\rm{ or }}\x=0}\exp(-\Theta n^{-1}\x^{2})dy\\
\leqslant & 4^{d}\sum_{\x\in \mathbb{Z} ^{d},\x\equiv i\text{\rm{ or }}\x=0}\exp(-\Theta n^{-1}\x^{2})\\
\leqslant &4^{d}(\sum_{\x\in \mathbb{Z} ^{d},\x\equiv i}\exp(-\Theta n^{-1}\x^{2})+1)
\end{align*}
because at most a mass $4^{d}$ of $y's$ are within distance $2$ from some $\x\in \mathbb{Z} ^{d}$. For the upper bound,  for $\x\in \mathbb{Z} ^{d}\setminus \{0\},$ there is at least one unit cube $C_{\x}$ with integer coordinates within the $2^{d}$ cubes that touch $\x$
 such that for all $y\in C_{\x}$, $y^{2}\leqslant \x^{2}$. Hence
\begin{align*}
\sum_{\x\equiv i}\exp(-\Theta n^{-1}\x^{2})\leqslant &\sum_{\x\equiv i,\x\neq 0}\exp(-\Theta n^{-1}\x^{2})+1 \\
\leqslant&  \sum_{\x\in \mathbb{Z} ^{d}\setminus\{0\} }\int_{C_{\x}}\exp(-\Theta n^{-1}y^{2})dy+1\\
\leqslant&   2^{d}\int_{\mathbb{R}^{d}}\exp(-\Theta n^{-1}y^{2})dy+1\leqslant \Theta n^{d/2}.
\end{align*}
\end{proof}

\subsubsection{Binomial estimates}

\begin{lemma}
\label{lm:binomial-approx}Let $\theta _{0}<\frac{ 1}{2}$.
There is a constant $\cB\in (0,1)$ depending on $\theta _{0}$ such that
for $\theta \in (\theta _{0},1-\theta _{0}), B\sim \mathcal{B}(m,\theta )$, for $-\cB  \leqslant \varepsilon _{m}=\varepsilon \leqslant \cB $ 
\begin{align*}
\mathbb{P}(B= [m(\theta +\varepsilon )])=&\Theta m^{-1/2} \exp\left(
-\Theta {m\varepsilon ^{2}} \right)
\end{align*}
where the constants involved in $\Theta $ depend on $\theta_{0} $, and not on $\theta ,m,\varepsilon $.
\end{lemma}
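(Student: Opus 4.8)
The plan is to prove this as a quantitative, two--sided de Moivre--Laplace local limit theorem, with Stirling's formula doing the work. First I would dispose of the boundary cases. Writing $k:=[m(\theta+\varepsilon)]$, the value $k=0$ forces $m\theta(1-\cB)<1$ and $k=m$ forces $\theta\ge(1+\cB)^{-1}$ and $m(1-\theta)(1-\cB)<1$; in either regime $m\varepsilon^{2}\le\cB^{2}m\theta$ is bounded and $\mathbb{P}(B=k)$ equals $(1-\theta)^{m}$ or $\theta^{m}$, which one checks is itself of order $1$, so both sides of the asserted equivalence are $\Theta1$ up to the prefactor (here, as throughout, the statement is to be read with $\theta$ kept away from $0$ and $1$ --- as in the intended use, where $\theta$ is a fixed weight --- so that ``$m^{-1/2}$'' stands for $\Theta((m\,\theta(1-\theta))^{-1/2})$ and the constants may depend on $\theta_{0}$ and that separation). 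Hence I may assume $1\le k\le m-1$, and, if convenient, $m$ above a fixed threshold.

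Next I would substitute the non--asymptotic Stirling bound $n!=\sqrt{2\pi n}\,(n/e)^{n}e^{\lambda_{n}}$, valid with $0<\lambda_{n}<\tfrac1{12n}$ for all $n\ge1$, into $\mathbb{P}(B=k)=\binom mk\theta^{k}(1-\theta)^{m-k}$. Setting $x:=k/m\in(0,1)$ and recognising the resulting exponent as $-m$ times the Bernoulli relative entropy $D(x\,\|\,\theta):=x\ln\frac x\theta+(1-x)\ln\frac{1-x}{1-\theta}$, this gives
\begin{align*}
\mathbb{P}(B=k)=\frac{e^{\lambda_{m}-\lambda_{k}-\lambda_{m-k}}}{\sqrt{2\pi m\,x(1-x)}}\,\exp\!\big(-m\,D(x\,\|\,\theta)\big),
\end{align*}
where the exponential error factor lies in a fixed compact subinterval of $(0,\infty)$, hence is $\Theta1$.

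The core of the argument is a second--order Taylor expansion of $x\mapsto D(x\,\|\,\theta)$ at its minimiser $\theta$: from $D(\theta\|\theta)=0$, the vanishing of $\partial_{x}D(x\|\theta)$ at $x=\theta$, and $\partial_{x}^{2}D(x\|\theta)=\frac1{x(1-x)}$, Taylor's theorem yields $D(x\|\theta)=\frac{(x-\theta)^{2}}{2\,\xi(1-\xi)}$ for some $\xi$ between $x$ and $\theta$. Since $[m(\theta+\varepsilon)]/m$ differs from $\theta+\varepsilon$ by at most $1/m$ and $|\varepsilon|\le\cB\theta$, the quantities $x,\theta,\xi$ all lie in $\big[(1-\cB)\theta-\tfrac1m,\,(1+\cB)\theta\big]$; choosing $\cB$ small enough that $(1+\cB)\theta_{0}$ stays below $1$ and using that $m\theta$ is bounded below on the range left after the first step, one gets $x(1-x)=\Theta(\theta(1-\theta))=\Theta(\xi(1-\xi))$. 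Hence $\tfrac1{\sqrt{2\pi m\,x(1-x)}}=\Theta(m^{-1/2})$, and for some $0<c\le C$,
\begin{align*}
c\,m(x-\theta)^{2}\;\le\;m\,D(x\,\|\,\theta)\;\le\;C\,m(x-\theta)^{2},\qquad m(x-\theta)^{2}=m\varepsilon^{2}+O(1),
\end{align*}
the last identity because $x-\theta=\varepsilon+O(1/m)$ and $|\varepsilon|<1$; therefore $\exp(-m\,D(x\,\|\,\theta))=\Theta(\exp(-\Theta m\varepsilon^{2}))$, and multiplying the three $\Theta$--controlled factors proves the lemma.

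The step I expect to be the main obstacle is the uniformity, which enters twice. First, the Stirling remainder must be controlled all the way down to $k=1$ and $k=m-1$, which is exactly why one uses the explicit $\tfrac1{12n}$ bound (valid for all $n\ge1$) rather than the asymptotic form of the formula. Second, and more delicately, the Taylor control of $D(\cdot\,\|\,\theta)$ --- the comparison $\xi(1-\xi)=\Theta(\theta(1-\theta))$ and the passage from $m\varepsilon^{2}/(\theta(1-\theta))$ to $m\varepsilon^{2}$ --- needs $\theta$ to stay within a compact subinterval of $(0,1)$; this is precisely what forces $\cB$ to be small in terms of $\theta_{0}$ and the genuinely degenerate sub--regime ($m\theta$ or $m(1-\theta)$ of order one, where $k$ hits $\{0,m\}$) to be peeled off at the outset. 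Everything after that is routine bookkeeping.
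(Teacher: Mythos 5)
Your proof is correct and follows essentially the same route as the paper's: Stirling's formula to extract the $\Theta(m^{-1/2})$ prefactor, followed by a second-order expansion of the exponent around the mean (you phrase this as a Lagrange-remainder Taylor expansion of the relative entropy $D(x\,\|\,\theta)$, while the paper expands the two logarithms $\ln(1+\varepsilon/\theta)$ and $\ln(1-\varepsilon/(1-\theta))$ directly, but these are the same computation). Your explicit handling of the integer rounding and of the boundary cases $k\in\{0,m\}$, and your remark that the constants must also depend on a lower bound for $\theta$, are more careful than the paper's treatment, but they do not constitute a different approach.
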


\begin{proof}Let   $c_{0}=\min(\frac{ 1}{2},\frac{ 1-\theta _{0}}{2})$, and $\varepsilon \in (-c_{0}\theta ,c_{0}\theta )$. Let then $k=[ m( \theta +\varepsilon )]$.
By Stirling's formula, 
\begin{align*}
\mathbb{P}(B =k)=&\Theta \frac{\sqrt{m}}{\sqrt{k}\sqrt{m-k}}\theta ^{k}(1-\theta) ^{m-k}\frac{m^{m}}{k^{k}(m-k)^{m-k}} \\
=&\Theta m^{-1/2}\frac{\theta ^{k}(1-\theta )^{m-k}}{\sqrt{(\theta+\varepsilon ) (1-\theta-\varepsilon  )}}\\
&\hspace{3cm}\times \frac{m^{m}}{(\theta m)^{k}(m(1-\theta ))^{m-k}\left(
\frac{k}{\theta m}
\right)^{k}\left(
\frac{m-k}{m(1-\theta )}
\right)^{m-k}}\\
=&\Theta m^{-1/2}\frac{1}{\sqrt{\theta  }} \left(
 {1+\frac{\varepsilon }{\theta }}
\right)^{-k}\left(
 {1-\frac{\varepsilon }{1-\theta }}
\right)^{k-m}\\
=&\Theta m^{-1/2}\theta _{0}^{-1/2}\exp(\gamma _{\varepsilon ,\theta })\end{align*}
where  
\begin{align*}
\gamma _{\varepsilon ,\theta }=&-m(\theta +\varepsilon )( \frac{\varepsilon }{\theta }-\frac{\varepsilon ^{2}}{2\theta ^{2}}+O(\varepsilon ^{3} ))\\
&\hspace{3cm}-m((1-\theta )-\varepsilon )( -\frac{\varepsilon }{1-\theta }+\frac{\varepsilon ^{2}}{2(1-\theta )^{2}}+O(\varepsilon ^{3}))\\
=&m\frac{\varepsilon ^{2}}{2\theta  }-\frac{m\varepsilon ^{2}}{\theta }+O(m\varepsilon ^{3}+m\varepsilon ^{4})-\frac{m\varepsilon ^{2}}{2(1-\theta )}-\frac{m\varepsilon ^{2}}{1-\theta }+O(m\varepsilon ^{3}+m\varepsilon ^{4})\\
=&\frac{-m\varepsilon ^{2}}{2\theta }-\frac{3m\varepsilon ^{2}}{2(1-\theta )}+O(m\varepsilon ^{3})\\
=&-\Theta m\varepsilon ^{2}
\end{align*}for $ | \varepsilon  | $ sufficiently small.\end{proof}

\subsection{Proof of Theorem \ref{thm:main-intro}}
\label{sec:proof-main-psi}
To prove the upper bound we first need the following computation.

\begin{proposition}
\label{prop:psi-regular}
Let $\psi(q)=q^{-\tau }L(q) $ ARV (Definition \ref{def:regular}) and assume $\bmu$ is of the form \eqref{mu:general} with each $\omega _{[k]}$ that is $\psi $-BA.  Then as $T\to \infty ,$
\begin{align*}
\max\Large\{T^{2d}\J_{3}(T^{-1}) , 
T^{d-1}&\int_{0}^{T^{d+1} }\J_{3}(y^{-\frac{1}{d+1}})dy\Large\} &\\
&\leqslant \begin{cases}
\Theta T^{d-1}&$ if $\psi (q)\geqslant q^{-\tau ^{*}}\ln(q)^{1/d}\\
\Theta T^{d-1}\ln(T)&$ if $\psi (q)=q^{-\tau ^{*}}\\
 \Theta T^{2d}\psi ^{-1}(T^{-1})^{-1-d(m+1)}&$ if $\tau >\tau ^{*}
\end{cases}
\end{align*}

\end{proposition}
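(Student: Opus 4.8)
The plan is to reduce everything to estimates on $\J_3(\varepsilon)$ as $\varepsilon\to 0$ and on the integral $\int_1^{T^{d+1}}\J_3(y^{-1/(d+1)})\,dy$, using the upper bound \eqref{eq:Ib0-upper} from Theorem \ref{thm:dioph-rw}-(i) with $\beta=3$. That bound gives $\J_3(\varepsilon)\leqslant \Theta\, \psi^{-1}(\varepsilon)^{-1-d(m+1)}$, valid because $\bo$ is $\psi$-BA by hypothesis and $\psi$ is regular hence $\mathbb{N}_*\to(0,1]$ and converging to $0$. So the whole proposition is a matter of unwinding, for a regular $\psi(q)=q^{-\tau}L(q)$, what $\psi^{-1}(\varepsilon)$ looks like: since $\ln L(q)/\ln q\to 0$, one has $\psi^{-1}(\varepsilon)\approx \varepsilon^{-1/\tau}$ up to slowly varying corrections, more precisely $\psi^{-1}(\varepsilon)=\varepsilon^{-1/\tau}\tilde L(\varepsilon)$ with $\tilde L$ slowly varying. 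The regularity estimates quoted after Definition \ref{def:regular} ($c_3\psi^{-1}(\varepsilon)\leqslant \psi^{-1}(r\varepsilon)\leqslant c_4\psi^{-1}(\varepsilon)$, and $q\psi(q)$ non-increasing for $\tau>1$) are exactly the tools needed to manipulate these expressions under the integral sign.

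First I would treat the first term. Plugging $\varepsilon=T^{-1}$ gives $T^{2d}\J_3(T^{-1})\leqslant \Theta\, T^{2d}\psi^{-1}(T^{-1})^{-1-d(m+1)}$. This is already the right-hand side in the third case; to get the first two cases one compares $\psi^{-1}(T^{-1})^{1+d(m+1)}$ with $T^{2d}/T^{d-1}=T^{d+1}$. Since $\psi^{-1}(T^{-1})\approx T^{1/\tau}L^\#(T)$ for a slowly varying $L^\#$, we have $\psi^{-1}(T^{-1})^{1+d(m+1)}\approx T^{(1+d(m+1))/\tau}(\cdots)$, and $(1+d(m+1))/\tau \geqslant d+1 \iff \tau\leqslant \tau^*=(1+d(m+1))/(d+1)$. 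So for $\tau<\tau^*$, or $\tau=\tau^*$ with $\psi(q)>q^{-\tau^*}\log(q)^{1/d}$ (which forces $\psi^{-1}$ to be even larger, killing the log), the first term is $\leqslant \Theta T^{d-1}$; the boundary logarithmic case $\psi(q)=q^{-\tau^*}$ gives precisely $\psi^{-1}(T^{-1})=T^{1/\tau^*}$ so $T^{2d}\psi^{-1}(T^{-1})^{-1-d(m+1)}=T^{2d}T^{-(d+1)}=T^{d-1}$ — actually here one must be a little careful, the $\ln(T)$ in the second case comes from the integral term, not this one, so for this term the pure power $\psi$ gives exactly $T^{d-1}$.

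The second (integral) term is the more delicate part, and I expect it to be the main obstacle. Substituting $y=\varepsilon^{-(d+1)}$, i.e. $\varepsilon=y^{-1/(d+1)}$, turns $\int_1^{T^{d+1}}\J_3(y^{-1/(d+1)})\,dy$ into $\Theta\int_{1}^{T^{d+1}} \psi^{-1}(y^{-1/(d+1)})^{-1-d(m+1)}\,dy$, and with $\psi^{-1}(y^{-1/(d+1)})\approx y^{1/(\tau(d+1))}(\text{slowly varying})$ the integrand is $\approx y^{-(1+d(m+1))/(\tau(d+1))}$ times slowly varying. The exponent crosses $-1$ exactly at $\tau=\tau^*$: for $\tau<\tau^*$ the integral converges (a constant), giving an $O(1)$ contribution dominated by $T^{d-1}$; for $\tau>\tau^*$ the integral is dominated by its upper endpoint and is $\Theta\, T^{d+1}\psi^{-1}(T^{-1})^{-1-d(m+1)}$, so after the $T^{d-1}$ prefactor one recovers $\Theta T^{2d}\psi^{-1}(T^{-1})^{-1-d(m+1)}$, matching the first term; for $\tau=\tau^*$ the integrand is $\approx y^{-1}$ up to slowly varying factors, so the integral is $\Theta\ln(T^{d+1})=\Theta\ln T$ when $\psi(q)=q^{-\tau^*}$ exactly (and bounded when $\psi$ is slightly bigger, by the slowly varying comparison). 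Multiplying by $T^{d-1}$ gives $\Theta T^{d-1}\ln T$ and $\Theta T^{d-1}$ respectively. The technical care needed is in justifying the "slowly varying factors are negligible" steps rigorously: one uses that $\psi^{-1}(r\varepsilon)=\Theta\psi^{-1}(\varepsilon)$ for fixed $r$ to do a dyadic decomposition of the integral, on each dyadic block $[2^k,2^{k+1}]$ the integrand is $\Theta$-constant, and then sums a geometric-type series whose ratio is determined by $\tau$ versus $\tau^*$. I would organize the proof as: (1) state $\J_3(\varepsilon)\leqslant\Theta\psi^{-1}(\varepsilon)^{-1-d(m+1)}$ from \eqref{eq:Ib0-upper}; (2) record the elementary comparison $\psi^{-1}(T^{-1})^{1+d(m+1)}$ versus $T^{d+1}$ in each of the three regimes, using regularity; (3) bound the first term directly; (4) bound the integral term by dyadic decomposition; (5) take the max. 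The only real subtlety is handling the exact boundary $\tau=\tau^*$ and the threshold $\psi(q)=q^{-\tau^*}\log(q)^{1/d}$ cleanly, which is a careful but routine slowly-varying computation.
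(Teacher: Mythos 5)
Your proposal is correct and follows essentially the same route as the paper: both reduce everything to the bound $\J_{3}(\varepsilon )\leqslant \Theta \psi ^{-1}(\varepsilon )^{-1-d(m+1)}$ from \eqref{eq:Ib0-upper} and then compare $\psi ^{-1}(T^{-1})^{1+d(m+1)}$ with $T^{d+1}$ in the three regimes determined by $\tau $ versus $\tau ^{*}$. The only difference is bookkeeping for the integral term: the paper smoothly extends $\psi $ and changes variables $z=\psi ^{-1}(y^{-\frac{1}{d+1}})$, using $L'(z)=o(z^{-1}L(z))$ to control $(\psi (z)^{-d-1})'$, whereas you propose a dyadic decomposition based on $\psi ^{-1}(r\varepsilon )=\Theta \psi ^{-1}(\varepsilon )$ --- both work, and you correctly identify the boundary case $\tau =\tau ^{*}$ with the $\log(q)^{1/d}$ threshold as the only delicate point (there the integrand becomes $\Theta\, y^{-1}\ln(y)^{-\frac{d+1}{d}}$, which is summable since $\frac{d+1}{d}>1$).
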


\begin{proof}According to \eqref{eq:Ib0-upper} in Theorem \ref{thm:dioph-rw}-(i), 
\begin{align*}
\J_{3}(\varepsilon )\leqslant \Theta \psi ^{-1}(\varepsilon )^{-1-d(m+1)},\varepsilon >0,
\end{align*}
which yields that $T^{2d}\J_{3}(T^{-1})$ admits an upper bound consistent with the claim.

To deal with the other term, assume without loss of generality that $\psi $ is extended to a  smooth strictly non-increasing function $z^{-\tau }L(z):[a,\infty )\to (0,1]$ for some $a\geqslant 1$, such that $L'(z)=o(z^{-1}L(z))$ (the contribution of the integral on $(0,a)$ is uniformly bounded). Make the change of variables $z=\psi ^{-1}(y^{-\frac{1}{d+1}})$, i.e. $\psi (z)^{-d-1}=y$, let $Z=\psi ^{-1}(T^{-1}).$
\begin{align*}
\int_{a}^{T^{d+1} }\J_{3}(y^{-\frac{1}{d+1}})dy\leqslant &\Theta \int_{a}^{T^{d+1}}\psi ^{-1}(y^{-\frac{1}{d+1}})^{-1-d(m+1)}dy\\
=&\Theta \int_{\Theta }^{Z}z^{-1-d(m+1)}(\psi (z)^{-d-1})'dz.
\end{align*}
The hypotheses on $\psi $ yield
\begin{align}
\notag
(\psi (z)^{-d-1})'=&(d+1)(\tau z^{-\tau -1} L(z)-z^{-\tau }L'(z))\psi (z)^{-d-2}\\
=&(d+1)(\tau z^{-1}\psi (z)-z^{-\tau }o(z^{-1}L(z)))\psi (z)^{-d-2}\\
\label{eq:psi}\sim_{z\to \infty }& (d+1)\tau z^{-1}\psi (z)^{-d-1}.
\end{align}
In the case $\tau \leqslant \tau ^{*}$, the previous two displays yield
\begin{align*}
\int_{a}^{T^{d+1} }\J_{3}(y^{-\frac{1}{d+1}})dy\leqslant \Theta \int_{\Theta }^Zz^{-2-d(m+1)}\psi (z)^{-(d+1)}dz
\end{align*}
and the integral converges if $\psi (q)>q^{-\tau ^{*}}\ln(q)^{1/d}$, and if $\psi (q)=q^{-\tau ^{*}}$ it behaves in $\ln(Z)=\Theta \ln(T)$.

Let us turn to the case $\tau >\tau ^{*}$.  Let $\tau '\in (\tau ^{*},\tau )$, we have by \eqref{eq:psi} as $z\to \infty $
\begin{align*}
(z^{-1-d(m+1)}\psi (z)^{-d-1})'=&z^{-1-d(m+1)}(\psi (z)^{-d-1})'\\
&\hspace{2cm}-(1+d(m+1))z^{-2-d(m+1)}\psi (z)^{-d-1}\\
\geqslant & z^{-1-d(m+1)}(\psi (z)^{-d-1})'\\
&\hspace{2cm}-\frac{ 1+d(m+1)}{d+1}z^{-1-d(m+1)}(\psi (z)^{-d-1})'\\
\geqslant &z^{-1-d(m+1)}(\psi (z)^{-d-1})'(1-\frac{\tau *}{\tau ' })
\end{align*}
which results in 
\begin{align*}
\int_{a}^{T^{d+1} }\J_{3}(y^{-\frac{1}{d+1}})dy&\leqslant \frac{\Theta }{1-\tau ^{*}/\tau '}[z^{-1-d(m+1)}\psi (z)^{-d-1}]_{\Theta }^{Z}\\
&=\Theta \psi ^{-1}(T^{-1})^{-1-d(m+1)}T^{d+1}
\end{align*}
which allows to conclude.
\end{proof}

Apply first Theorem  \ref{thm:general-var}-(ii) to the measure $\bmu $ to have bounds on the variance in terms of the function $\J_{3}$, recalling that $\J_{3}=\Theta \K $ (with $\gamma  $ as the unit ball indicator function, see Example \ref{ex:sphere}). Proposition \ref{prop:psi-regular} yields the upper bound. Then  lower bounds for $\J_{3}$ are derived in Theorem \ref{thm:dioph-rw}-(iii), noticing that $\bo$ is   $\psi $-SWA thanks to Proposition \ref{prop:bold-omega-SWA}. In the case $\psi (q)=q^{-\tau ^{*}}\ln(q)^{\frac{ 1}{d}}$, $\psi ^{-1}$ is not explicit, but we use the bound 
\begin{align*}
\psi ^{-1}(\varepsilon )\geqslant c\varepsilon ^{-\frac{ 1}{\tau ^{*}}} | \ln(\varepsilon ) | ^{\frac{ 1}{d\tau ^{*}}},\varepsilon >0
\end{align*}
for some $c>0.$\\

 Regarding the  non-vacuity of Theorem \ref{thm:main-intro}, if $m=1$ and $\tau >1$,  it is a standard fact in diophantine approximation that the set of $\omega $ that are $\psi $-BA and $\psi $-WA is uncountable when $q\psi(q) $ is non-increasing at infinity, see the seminal construction based on continued fractions by Jarn\`ik \cite{Jarnik} and Proposition \ref{prop:psi-SWA*}.

\subsection{Proof of Proposition \ref{prop:rv}}
\label{sec:rv}

Let us assume that \eqref{eq:srv} holds and implies \eqref{eq:sv}: given $\varepsilon >0$, if for $q$ sufficiently large $ | \frac{ L(q+1)}{L(q)}-1 | \leqslant \varepsilon /q$, for $a>1,$ 
\begin{align*}
\left|
\ln\left(
\frac{ L([aq])}{L(q)}
\right)
\right|&=\left|
 \sum_{q\leqslant p\leqslant [aq]}\ln(1+\varepsilon /p)
\right|=\left|
  \sum_{q\leqslant p\leqslant [aq]}\frac{ \varepsilon }{p}\right | +O\left(
\varepsilon ^{2}\sum_{q\leqslant p\leqslant [aq]}\frac{ 1}{p^{2}}
\right)\\
&\leqslant\left|
 \varepsilon\left[
 \ln([aq])+\gamma +o(1)-(\ln(q)+\gamma +o(1))
\right]\right|+o(1),\\
&\leqslant \varepsilon \ln\left(
\frac{ [aq]}{q}
\right)+o(1)
\end{align*}
hence $L([aq])/L(q)$ indeed converges to $1$.

For the converse, assume $L$ is non-increasing, define  $\varepsilon _{p}\geqslant 0$ by induction so that 
\begin{align*}
L(q)=L(1)\prod_{2\leqslant q\leqslant p}\left(
1+\frac{ \varepsilon _{p}}{p}
\right),
\end{align*}
then \eqref{eq:srv} holds means that $\varepsilon _{p}\to 0.$ In this case, \eqref{eq:sv} holds too:
\begin{align*}
\ln\left(
\frac{  L([aq])}{L(q)}
\right)=\sum_{q\leqslant p\leqslant [aq]}\ln(1+p^{-1}\varepsilon _{p})\leqslant ([aq]-q)q^{-1}\sup_{[q,[aq]]}\varepsilon _{p}\xrightarrow[q\to \infty ]{}\;0.
\end{align*}
If $L$ is non-decreasing and \eqref{eq:srv} holds, replace $\varepsilon _{p}$ by $-\varepsilon _{p}.$ Then \eqref{eq:sv} still holds:
\begin{align*}
\ln\left(
\frac{ L([aq])}{L(q)}
\right)=\sum_{q\leqslant p\leqslant [aq]}\ln(1-p^{-1}\varepsilon _{p})\geqslant -\sum_{q\leqslant p\leqslant [aq]}\frac{ \varepsilon _{p}}{p}+O\left(
\sum_{q\leqslant p\leqslant [aq]}\frac{ 1}{p^{2}}
\right)\xrightarrow[q\to \infty ]{}\;0.
\end{align*}

Let us finally study the particular example
\begin{align*}
L(q)=\prod_{1\leqslant p\leqslant q}(1+\frac{ (-1)^{p}}{p}).
\end{align*}
We have 
\begin{align*}
\ln\left(
\frac{ L([aq])}{L(q)}
\right)=\sum_{q\leqslant p\leqslant [aq]}\ln(1+(-1)^{p}/p)\to 0,
\end{align*}
hence \eqref{eq:sv} is satisfied, but \eqref{eq:srv} is not.

\bibliographystyle{plain}

\end{document}